\pgfplotsset{compat=1.18}
\numberwithin{equation}{section}
\newtheorem{mainthm}{Main Theorem}
\newtheorem{thm}{Theorem}[section]
\newtheorem{lem}{Lemma}[section]
\theoremstyle{definition}
\newtheorem{dfn}{Definition}[section]
\theoremstyle{definition}
\newtheorem{ex}{Example}[section]
\theoremstyle{definition}
\newtheorem{ass}{Assumption}
\theoremstyle{remark}
\newtheorem{rem}{Remark}[section]
\begin{document}

  \title[Article Title]{Real Log Canonical Thresholds at Non-singular Points}
  \author*{\fnm{Yuki} \sur{Kurumadani}}\email{kurumadani@sigmath.es.osaka-u.ac.jp}
  \affil*{\orgdiv{Graduate School of Engineering Science}, \orgname{Osaka University}, \orgaddress{\street{1 Chome-3 Machikaneyamacho}, \city{Toyonaka}, \postcode{560-0043}, \state{Osaka}, \country{Japan}}}

\abstract{Recent advances have clarified theoretical learning accuracy in Bayesian inference, revealing that the asymptotic behavior of metrics such as generalization loss and free energy, assessing predictive accuracy, is dictated by a rational number unique to each statistical model, termed the learning coefficient (real log canonical threshold) \citep{watanabe1}. For models meeting regularity conditions, their learning coefficients are known \citep{watanabe1}. However, for singular models not meeting these conditions, exact values of learning coefficients are provided for specific models like reduced-rank regression \citep{Aoyagi1}, but a broadly applicable calculation method for these learning coefficients in singular models remains elusive.

This paper extends the application range of the previous work and provides an approach that can be applied to many points within the set of realizable parameters. Specifically, it provides a formula for calculating the real log canonical threshold at many non-singular points within the set of realizable parameters. If this calculation can be performed, it is possible to obtain an upper bound for the learning coefficient of the statistical model. Thus, this approach can also be used to easily obtain an upper bound for the learning coefficients of statistical models. As an application example, it provides an upper bound for the learning coefficient of a mixed binomial model, and calculates the learning coefficient for a specific case of reduced-rank regression, confirming that the results are consistent with previous research.}

  \keywords{resolution map, singular learning theory, real log canonical threshold, algebraic geometry}

  \maketitle

\section{Introduction}
This paper extends the application range of \cite{kurumadani1}, using the same symbols and premises. Moreover, many theoretical aspects are based on \cite{kurumadani1}.

\subsection{Definitions of Symbols and Assumptions}
Consider a statistical model $p(x|\theta)$ with continuous parameters $\theta = (\theta_1, \ldots, \theta_d) \in \Theta (\subset \mathbb{R}^d) (d \geq 1)$, and let the true distribution be $q(x)$. Let $\chi$ be the set of possible data values $x$. Assume that the statistical model is \textit{realizable}, meaning there exists some parameter $\theta_*$ such that $q(x) = p(x|\theta_*)\ a.s.$. Such a parameter $\theta_*$ is called a realizable parameter, and the entire set of realizable parameters is denoted by $\Theta_*$. The prior distribution of the parameters $\varphi(\theta)$ satisfies $\varphi(\theta_*) > 0$ for \textit{any realizable parameter $\theta_*$}. Let $X$ be a random variable following the true distribution $q$, and let $\mathbb{E}_X[\cdot]$ denote the operation of taking the mean with respect to the random variable $X$. This paper assumes that the operations of taking expectations and partial derivatives with respect to $\theta$ are interchangeable.

The Kullback-Leibler divergence
\[
K(\theta) := \mathbb{E}_X\left[\log {\frac{p(X|\theta_*)}{p(X|\theta)}}\right]
\]
is assumed to be analytic around $\theta = \theta_*$. The log-likelihood ratio function
\[
f(x|\theta) := \log {\frac{p(x|\theta_*)}{p(x|\theta)}}
\]
is assumed to be $L^2$ and analytic around $\theta = \theta_*$. Note that if the model is realizable, then $\Theta_* = \{\theta \in \Theta | K(\theta) = 0\}$. For a fixed data $x \in \chi$, define the $m$-th order Taylor expansion of the log-likelihood ratio function $f(x|\theta)$ around $\theta = \theta_*$ with respect to $\theta_1, \ldots, \theta_s\ (s \leq d)$ as
\[
F_m(x|\theta_1, \ldots, \theta_s) := \sum_{\substack{i_1 + \cdots + i_s = m \\ i_1, \ldots, i_s \in \mathbb{Z}_{\geq 0}}} \frac{1}{i_1! \cdots i_s!} \times \left.\frac{\partial^m f(x|\theta)}{\partial \theta_1^{i_1} \cdots \partial \theta_s^{i_s}}\right|_{\theta = \theta_*} \times (\theta_1 - \theta_{1*})^{i_1} \cdots (\theta_s - \theta_{s*})^{i_s}.
\]

The learning coefficient is an important concept in determining the asymptotic behavior of generalization error and free energy. It is known to coincide with the concept of real log canonical threshold in algebraic geometry, which is defined using a method known as resolution of singularities. Here, resolution of singularities refers to the process described by Theorem~\ref{resolution thm}, which transforms an analytic function $F$ into a normal crossing.
\begin{thm}[Resolution of singularities]\label{resolution thm}
Let $F(x)$ be a real analytic function defined near the origin in $\mathbb{R}^d$ and not identically zero with $F(0) = 0$. Then there exists an open set $W (\subset \mathbb{R}^d)$ containing the origin, a real analytic manifold $U$, and a proper analytic map $g: U \rightarrow W$ such that:
\begin{itemize}
\item[(1)] Let $W_0 := F^{-1}(0), U_0 := g^{-1}(W_0)$. Then $g: U - U_0 \rightarrow W - W_0$ is an analytic isomorphism.
\item[(2)] At any point $Q \in U_0$, taking local coordinates $u = (u_1, \ldots, u_d)$ centered at $Q$, 
\begin{align}\label{eq:normalcrossing}
F(g(u)) = a(u)u_1^{k_1}u_2^{k_2}\cdots u_d^{k_d}\\
\left|g^{\prime}(u)\right| = \left|b(u)u_1^{h_1}u_2^{h_2}\cdots u_d^{h_d}\right|\notag
\end{align}
where $k_i, h_i (i=1,\ldots, d)$ are non-negative integers, and $a(u), b(u)$ are real analytic functions defined near the origin in $\mathbb{R}^d$, satisfying $a(0) \neq 0, b(0) \neq 0$.
\end{itemize}
\end{thm}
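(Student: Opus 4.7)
This statement is Hironaka's theorem on resolution of singularities, specialized to the real analytic category. A self-contained proof is well beyond the scope of a short sketch; the plan here is to outline the architecture of Hironaka's argument (and of the more modern algorithmic variants due to Bierstone--Milman and others) and to flag the steps that carry the main technical burden.

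The plan is to argue by induction on a composite invariant that measures how far $F$ is from being a monomial. First I would introduce the \emph{order} $\mathrm{ord}_p F$ of $F$ at $p$, defined as the smallest $k$ such that some partial derivative of $F$ of total degree $k$ is nonzero at $p$. The maximum-order locus $\Sigma := \{\, p : \mathrm{ord}_p F = \max\,\}$ is a closed real analytic subset of $F^{-1}(0)$, and one seeks a smooth analytic center $C \subset \Sigma$ along which to blow up. A blowup produces a proper analytic map $g_1 : M_1 \to W$ with $M_1$ a real analytic manifold, and in coordinates adapted to the exceptional divisor the pulled-back function factors as $F \circ g_1 = u_1^m \tilde F$, where $m \ge 1$ is the generic order of $F$ along $C$ and $\tilde F$ is the strict transform.

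The induction is driven by the claim that the singularity of $\tilde F$ is strictly simpler than that of $F$ in the lexicographic order on $(\max_p \mathrm{ord}_p F,\, \dim \Sigma,\, \dots)$. To implement this one passes to a \emph{hypersurface of maximal contact}, a smooth hypersurface containing $\Sigma$ on which the resolution problem reduces to one in fewer variables. After finitely many blowups the maximum order drops, and iterating terminates in a chart where $F \circ g = a(u)\, u_1^{k_1} \cdots u_d^{k_d}$ with $a(0) \ne 0$, which is exactly \eqref{eq:normalcrossing}. The Jacobian factor $|g'(u)|$ acquires the stated monomial form automatically, since each individual blowup multiplies the Jacobian by a power of the new exceptional coordinate, and the composition of finitely many such contributions gives $|b(u)|\, u_1^{h_1} \cdots u_d^{h_d}$ with $b(0) \ne 0$.

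The hard part is twofold. First, constructing a canonical smooth center $C \subset \Sigma$ and proving that the composite invariant strictly decreases under blowup --- this is the content of the maximal-contact theory and is the technical heart of Hironaka's proof. Second, ensuring that local blowup charts patch globally into a single real analytic manifold $U$ and a proper map $g : U \to W$, which requires a functorial, coordinate-independent choice of center; in the real analytic setting one additionally has to verify that centers obtained via complexification descend to the real locus. Since these steps constitute the existing literature on resolution of singularities, the theorem is invoked here as a black box, and a detailed reproduction of the argument is not attempted.
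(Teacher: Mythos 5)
The paper does not prove this theorem at all: it is quoted as a standard imported result (Hironaka's resolution of singularities in the local real analytic form used by Watanabe), so your decision to treat it as a black box is exactly what the paper does. Your sketch of the standard architecture (maximal order locus, smooth centers, maximal contact, lexicographically decreasing invariant, monomial Jacobian from composing blowup charts, and descent from the complexification in the real analytic case) is an accurate summary of the literature you are deferring to, and nothing more is required here.
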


Expressions such as (\ref{eq:normalcrossing}) are referred to as \textit{normal crossings}.

\begin{dfn}[real log canonical threshold]\label{LambdaDef}\leavevmode\par
Let $F$ be a real analytic function defined on an open set $O$ in $\mathbb{R}^d$, and let $C$ be a compact set containing $O$. For each point $P$ in $C$ where $F(P)=0$, perform a coordinate transformation such that $P$ corresponds to the origin in $\mathbb{R}^d$. Apply Theorem~\ref{resolution thm}, and fix one $(W, U, g)$ as guaranteed by Theorem~\ref{resolution thm}(2). Also, denote the non-negative integers $h_i, k_i$ given by Theorem~\ref{resolution thm}(2) for any point $Q \in U_0$ as $h_i^{(Q)}, k_i^{(Q)}$.
\begin{itemize}
\item[(1)]
  Define the \textit{real log canonical threshold} $\lambda_P$ at point $P$ of function $F$ as:
\[
\lambda_P=\inf_{Q\in U_0}\left\{\min_{i=1,\ldots,d}{\frac{h_i^{(Q)}+1}{k_i^{(Q)}}}\right\}
\]
where if $k_i=0$, define $(h_i+1)/k_i=\infty$. It is known that this is well-defined, i.e., independent of the choice of $(W, U, g)$. \cite[Theorem 2.4]{watanabe1}
\item[(2)]
  Define the \textit{real log canonical threshold} $\lambda$ for the compact set $C$ of function $F$ as:
  \[
  \lambda = \inf_{P \in C}\lambda_P
  \]
  \cite[Definition 2.7]{watanabe1}
\item[(3)]
  In (2), for the point $P(\in C)$ that provides the minimum value, the term \textit{multiplicity} refers to the maximum number of $i$ satisfying $\lambda_P=(h_i^{(Q)}+1)/k_i^{(Q)}$. (If there are multiple points $P(\in C)$ giving the minimum value, multiplicity refers to the maximum of the maximum counts for each $i$.)
\end{itemize}
\end{dfn}

As already seen, this paper assumed a prior distribution $\varphi(\theta_*)>0$. In this context, it is known that the \textit{learning coefficient} $\lambda$ is equal to the real log canonical threshold for the compact set $\Theta_*=\{\theta\in\Theta|K(\theta)=0\}$ \citep{watanabe1}. That is, by performing the resolution of singularities at each point $P$ in $\Theta_*$ as guaranteed by Theorem~\ref{resolution thm}, and by moving $P$ across the entire $\Theta_*$, the minimum value of $\lambda_P$ matches the learning coefficient $\lambda$.

\subsection{Upper Bounds on the Learning Coefficients for General Models}
This paper assumed a prior distribution $\varphi(\theta_*)>0$. In this setting, if for any $v$ near $\theta=\theta_*=(u_*,v_*)$, $K(u_*,v)=0$ holds, it is known that:
\begin{equation}\label{eq:zyokai02}
\lambda \leq \frac{d_1}{2}
\end{equation}
\cite[Theorem 7.3]{watanabe1}

\subsection{An Example Where the Real Log Canonical Threshold Cannot be Computed Using \cite{kurumadani1}}
Consider a random variable $X$ following a binomial distribution Bin$(2,\theta)$ with parameter $\theta (0 < \theta < 1)$.
  \[
  \tilde{p}(X=x|\theta)=\binom{2}{x}\theta^x(1-\theta)^{2-x}=
  \begin{cases}
    (1-\theta)^2, & (x=0)\\
    2\theta(1-\theta), & (x=1)\\
    \theta^2, & (x=2)
  \end{cases}
  \]

  Consider the following mixed distribution model with parameters $(\theta_1, \theta_2, \tau)$:
  \begin{align}\label{eq_intro:eq01}
  p(X=x|\theta_1,\theta_2,\tau):=
  \left(\tau+\frac{1}{2}\right)\cdot\tilde{p}\left(X=x\middle|\theta_2+\frac{1}{2}\right)
  +\left(\frac{1}{2}-\tau\right)\cdot\tilde{p}\left(X=x\middle|\theta_1+\frac{1}{2}\right)&\\
  (x=0,1,2)\notag&
  \end{align}
  
  Assume the true distribution is $\tilde{p}(X|1/2)$, then this model realizes the true distribution at the parameters $(\theta_1, \theta_2, \tau) = 0$.
  Let us compute the real log canonical threshold at the origin.

  First, we attempt to compute the real log canonical threshold using the methods described in \cite{kurumadani1}.
  The first derivatives of the log-likelihood ratio function $f$ with respect to each parameter are as follows:
  \begin{gather*}
    \left.\frac{\partial f}{\partial\theta_1}\right|_{(\theta_1,\theta_2,\tau)=0} = 2-2x, ~~ 
    \left.\frac{\partial f}{\partial\theta_2}\right|_{(\theta_1,\theta_2,\tau)=0} = 2-2x,~~
    \left.\frac{\partial f}{\partial\tau}\right|_{(\theta_1,\theta_2,\tau)=0} = 0
  \end{gather*}

  There is a first-order dependency relationship:
  \begin{align*}
  \left.\frac{\partial f}{\partial\theta_2}\right|_{(\theta_1,\theta_2,\tau)=0}
  = \left.\frac{\partial f}{\partial\theta_1}\right|_{(\theta_1,\theta_2,\tau)=0}
  \end{align*}
  If we perform the coordinate transformation $\theta_1^{\prime} = \theta_1+\theta_2$, it eliminates the first derivative with respect to $\theta_2$:
  \begin{gather*}
    \left.\frac{\partial f}{\partial\theta_1^{\prime}}\right|_{(\theta_1^{\prime},\theta_2,\tau)=0} = 2-2x, ~~ 
    \left.\frac{\partial f}{\partial\theta_2}\right|_{(\theta_1^{\prime},\theta_2,\tau)=0} = 0,~~
    \left.\frac{\partial f}{\partial\tau}\right|_{(\theta_1^{\prime},\theta_2,\tau)=0} = 0\\
    \left.\frac{\partial^2 f}{\partial\theta_2^2}\right|_{(\theta_1^{\prime},\theta_2,\tau)=0} = -8(1-4x+2x^2),~~
    \left.\frac{\partial^2 f}{\partial\theta_2\partial\tau}\right|_{(\theta_1^{\prime},\theta_2,\tau)=0} = 8-8x,~~
    \left.\frac{\partial^2 f}{\partial\tau^2}\right|_{(\theta_1^{\prime},\theta_2,\tau)=0} = 0
  \end{gather*}  
  Furthermore, a first-order dependency relationship exists:
  \[
  \left.\frac{\partial^2 f}{\partial\theta_2\partial\tau}\right|_{(\theta_1^{\prime},\theta_2,\tau)=0} =
  4\left.\frac{\partial f}{\partial\theta_1^{\prime}}\right|_{(\theta_1^{\prime},\theta_2,\tau)=0}
  \]
  Using the coordinate transformation $\theta_1^{\prime\prime} = \theta_1^{\prime}+4\theta_2\tau$, we achieve:
  \begin{gather*}
    \left.\frac{\partial f}{\partial\theta_1^{\prime\prime}}\right|_{(\theta_1^{\prime\prime},\theta_2,\tau)=0} = 2-2x, ~~ 
    \left.\frac{\partial f}{\partial\theta_2}\right|_{(\theta_1^{\prime\prime},\theta_2,\tau)=0} = 0,~~
    \left.\frac{\partial f}{\partial\tau}\right|_{(\theta_1^{\prime\prime},\theta_2,\tau)=0} = 0\\
    \left.\frac{\partial^2 f}{\partial\theta_2^2}\right|_{(\theta_1^{\prime\prime},\theta_2,\tau)=0} = -8(1-4x+2x^2),~~
    \left.\frac{\partial^2 f}{\partial\theta_2\partial\tau}\right|_{(\theta_1^{\prime\prime},\theta_2,\tau)=0} = 0,~~
    \left.\frac{\partial^2 f}{\partial\tau^2}\right|_{(\theta_1^{\prime\prime},\theta_2,\tau)=0} = 0
  \end{gather*}  
  Clearly, the two random variables $2-2X$ and $-8(1-4X+2X^2)$ are linearly independent, satisfying \cite[Assumption 1]{kurumadani1} for $m=2$. However,
  \[
  F_2(x|\theta_2,\tau) = -4(1-4x+2x^2)\theta_2^2
  \]
  leads to $F_2(x|\theta_2,\tau) = 0$ having non-trivial solutions $(\theta_2,\tau) = (0, \forall\tau)$.
  Therefore, applying \cite[Main Theorem 2]{kurumadani1} directly to obtain the real log canonical threshold is not possible.

\subsection{Technique of Treating Some Parameters as Constants} \label{ch_ex}
Having decided not to derive the real log canonical threshold using the method from \cite{kurumadani1}, this paper introduces the principal idea of \textit{treating the parameter $\tau$ as a constant} to derive the threshold. By fixing $\tau$ and considering only the parameters $(\theta_1, \theta_2)$ in the statistical model (\ref{eq_intro:eq01}), the derivatives are computed as follows:
  \begin{gather*}
    \left.\frac{\partial f}{\partial\theta_1}\right|_{(\theta_1, \theta_2)=0} = 2(1-2\tau)(1-x), ~~ 
    \left.\frac{\partial f}{\partial\theta_2}\right|_{(\theta_1, \theta_2)=0} = 2(1+2\tau)(1-x)
  \end{gather*}  
  These exhibit a linear dependency relationship for each fixed $\tau$:
  \footnote{More specifically, if $L$ represents the set of analytic functions defined near $\tau=0$, then they are linearly dependent over $L$.}
  \[
  \left.\frac{\partial f}{\partial\theta_2}\right|_{(\theta_1, \theta_2)=0} = 
  \frac{1+2\tau}{1-2\tau}\cdot\left.\frac{\partial f}{\partial\theta_1}\right|_{(\theta_1, \theta_2)=0}
  \]
  Applying the coordinate transformation
  \begin{equation}\label{eq_intro:eq07}
  \theta_1^{\prime} = \theta_1+\frac{1+2\tau}{1-2\tau}\cdot\theta_2
  \end{equation}
  results in:
  \begin{align}
    \left.\frac{\partial f}{\partial\theta_1^{\prime}}\right|_{(\theta_1^{\prime}, \theta_2)=0} &= 2(1-2\tau)(1-x)\label{eq_intro:eq02}\\
    \left.\frac{\partial f}{\partial\theta_2}\right|_{(\theta_1^{\prime}, \theta_2)=0} &= 0\label{eq_intro:eq03}\\
    \left.\frac{\partial^2 f}{\partial\theta_2^2}\right|_{(\theta_1^{\prime}, \theta_2)=0} &= -\frac{8(1+2\tau)}{1-2\tau}(1-4x+2x^2)\label{eq_intro:eq04}
  \end{align}    
  The terms (\ref{eq_intro:eq02}) and (\ref{eq_intro:eq04}) are verified to be linearly independent over $\mathbb{R}$ when $\tau=0$. Each $\tau$ in any small neighborhood around $\tau=0$, substituting any $\tau \in U$, ensures that (\ref{eq_intro:eq02}) and (\ref{eq_intro:eq04}) remain linearly independent over $\mathbb{R}$. Therefore, treating only $(\theta_1^{\prime}, \theta_2)$ as parameters, this approach fulfills the same assumptions as \cite[Assumption 1]{kurumadani1} for $r=1, d=2, m=2$.

  Consequently, with $\tau$ fixed, it appears feasible to compute the real log canonical threshold using \cite[Main Theorem 2]{kurumadani1}. Indeed, the blow-up described in \cite[Main Theorem 2]{kurumadani1}, namely, the blow-up centered around the submanifold $(\theta_1^{\prime}, \theta_2)=0$, demonstrates that a normal crossing of $K(\theta)$ can be obtained. Let us confirm this. Hereafter, $\theta_1^{\prime}$ will be denoted as $\theta_1$.

Using Mathematica, we computed the Taylor expansion of $K(\theta)$ as follows:
\begin{align}\label{eq_intro:eq05}
  K(\theta)
  &= (1-2\tau)^2\theta_1^2 + 8\tau^2(1-2\tau)^2\theta_1^4 + \frac{8(1+2\tau)^2}{(1-2\tau)^2}\theta_2^4 \notag \\
  &\quad - 16\tau(1-4\tau^2)\theta_1^3\theta_2 + 8(1+6\tau+8\tau^2)\theta_1^2\theta_2^2 \notag \\
  &\quad - \frac{16(1+2\tau)^2}{1-2\tau}\theta_1\theta_2^3 + (\text{terms of order 5 or higher}) \notag \\
  &= (1-2\tau)^2\theta_1^2 + \frac{8(1+2\tau)^2}{(1-2\tau)^2}\theta_2^4 + \text{(higher order terms)}.
\end{align}

Using (\ref{eq_intro:eq02}) and (\ref{eq_intro:eq04}), we have
  \begin{align}
    \frac{1}{2}\mathbb{E}_X\left[\left\{F_1(X|\theta_1)+F_2(X|\theta_2)\right\}^2\right]
    &=\frac{1}{2}\mathbb{E}_X\left[\left\{
    2(1-2\tau)(1-X)\theta_1-\frac{4(1+2\tau)}{1-2\tau}(1-4X+2X^2)\theta_2^2
    \right\}^2\right]\notag\\
    &=(1-2\tau)^2\theta_1^2+\frac{8(1+2\tau)^2}{(1-2\tau)^2}\theta_2^4\label{eq_intro:eq08}
  \end{align}
  which corresponds to the lower order terms of (\ref{eq_intro:eq05}).
  Thus, it has been confirmed that \cite[Main Theorem 1]{kurumadani1} holds.

Next, let's consider a blow-up $g_1$ at $(\theta_1, \theta_2) = 0$ and calculate the real log canonical threshold.

\begin{itemize}
  \item [(a)]
  First, by transforming (\ref{eq_intro:eq05}) with $\theta_2 = \theta_1 \theta_2'$, and using power series $h_1, a_1$, we have:
  \[
  K(\theta) = \theta_1^2 \left\{
    (1-2\tau)^2 + \frac{8(1+2\tau)^2}{(1-2\tau)^2} \theta_1^2 \theta_2'^4 + \theta_1 h_1(\theta_1, \theta_2')
  \right\} = \theta_1^2 a_1(\theta_1, \theta_2')
  \]
  At any point on $g_1^{-1}(0) = \{(\theta_1, \theta_2') | \theta_1 = 0\}$:
  \[
  \forall \theta_2',~ a_1(0, \theta_2') = (1-2\tau)^2 \neq 0
  \]
  (for $\tau$ near 0), confirming normal crossings in the local coordinates $(\theta_1, \theta_2')$.

  \item[(b)]
  Next, transforming (\ref{eq_intro:eq05}) with $\theta_1 = \theta_2 \theta_1'$, and using power series $h_2, a_2$, we obtain:
  \[
  K(\theta) = \theta_2^2 \left\{
    (1-2\tau)^2 \theta_1'^2 + \frac{8(1+2\tau)^2}{(1-2\tau)^2} \theta_2^2 + \theta_2 h_2(\theta_1', \theta_2)
  \right\} = \theta_2^2 a_2(\theta_1', \theta_2)
  \]
  At any point on $g_1^{-1}(0) = \{(\theta_1', \theta_2) | \theta_2 = 0\}$:
  \[
  a_2(\theta_1', 0) = 0 \Leftrightarrow \theta_1' = 0
  \]
  (for $\tau$ near 0), achieving normal crossings at points other than $(\theta_1', \theta_2) = 0$.
\end{itemize}

Therefore, it is necessary to find normal crossings at the point $(\theta_1', \theta_2) = 0$. Below, we perform another blow-up, $g_2$, centered at the point $(\theta_1', \theta_2) = 0$.

\begin{itemize}
  \item[(b1)]
  First, by transforming with $\theta_2 = \theta_1' \theta_2''$, and using power series $h_3, a_3$, we have:
  \begin{align*}
  K(\theta)
  &= \theta_1'^4 \theta_2''^2 \left\{
  (1-2\tau)^2 + \frac{8(1+2\tau)^2}{(1-2\tau)^2} \theta_2''^2 + \theta_1' h_3(\theta_1', \theta_2'')
  \right\} \\
  &= \theta_1'^4 \theta_2''^2 a_3(\theta_1', \theta_2'')    
  \end{align*}
  At any point on $g_2^{-1}(0) = \{(\theta_1', \theta_2'') | \theta_1' = 0\}$, for all $\theta_2''$:
  \[
  a_3(0, \theta_2'') = (1-2\tau)^2 + \frac{8(1+2\tau)^2}{(1-2\tau)^2} \theta_2''^2 \neq 0
  \]
  (for $\tau$ near 0), indicating that normal crossings are achieved in the local coordinates $(\theta_1', \theta_2'')$.

  \item[(b2)]
  Next, by transforming with $\theta_1' = \theta_2 \theta_1''$, and using power series $h_4, a_4$, we get:
  \[
  K(\theta)
  = \theta_2^4 \left\{
  (1-2\tau)^2 \theta_1''^2 + \frac{8(1+2\tau)^2}{(1-2\tau)^2} + \theta_2 h_4(\theta_1'', \theta_2)
  \right\}
  = \theta_2^4 a_4(\theta_1'', \theta_2)
  \]
  At any point on $g_2^{-1}(0) = \{(\theta_1'', \theta_2) | \theta_2 = 0\}$, for all $\theta_1''$:
  \[
  a_4(\theta_1'', 0) = (1-2\tau)^2 \theta_1''^2 + \frac{8(1+2\tau)^2}{(1-2\tau)^2} \neq 0
  \]
  (for $\tau$ near 0), indicating that normal crossings are achieved in this local coordinate system.
\end{itemize}

The summary of normal crossings for each local coordinate is as follows (refer to symbols $k_i^{(Q)}, h_i^{(Q)}$ in Definition~\ref{LambdaDef}).

\begin{table}[htbp]
  \centering
  \caption{Normal Crossings for Each Local Coordinate}
  \begin{tabular}{ccccccc}
    \hline
    No. & Local Coord. & $K(\theta)$ & Jacobian & $(k_1^{(Q)}, k_2^{(Q)})$ & $(h_1^{(Q)}, h_2^{(Q)})$ & $\lambda$ \\
    \hline
    (a) & $(\theta_1, \theta_2')$ & 
 $\theta_1^2 a_1(\theta_1, \theta_2')$ & $\theta_1$ & $(2,0)$ & $(1,0)$ & 1 \\
    (b) & $(\theta_1', \theta_2) \neq 0$ & 
 $\theta_2^2 a_2(\theta_1', \theta_2)$ & $\theta_2$ & $(0,2)$ & $(0,1)$ & 1 \\
    (b1) & $(\theta_1', \theta_2'')$ & 
 $\theta_1'^4 \theta_2''^2 a_3(\theta_1', \theta_2'')$ & $\theta_1'^2 \theta_2''$ & $(4,2)$ & $(2,1)$ & 3/4 \\
    (b2) & $(\theta_1'', \theta_2)$ & 
 $\theta_2^4 a_4(\theta_1'', \theta_2)$ & $\theta_2^2$ & $(0,4)$ & $(0,2)$ & 3/4 \\
    \hline
  \end{tabular}
\end{table}

From the above, it is evident that the real log canonical threshold is $3/4$ (multiplicity is 1).

Thus, by treating the parameter $\tau$ as a constant, it was possible to conduct the same discussion for the parameters $(\theta_1, \theta_2)$ as in \cite[Main Theorem 2]{kurumadani1}, and obtain the real log canonical threshold.

\subsection{Why Can the Real Log Canonical Threshold Be Computed by Treating Some Parameters as Constants?}
Let's consider why treating the parameter $\tau$ as a constant facilitates the discussion in the example above. The essential point is that the log-likelihood ratio function $f$ satisfies the following for parameter $\tau$:
\begin{equation}\label{eq_intro:eq06}
  f(\cdot|\theta=0,\forall\tau)=0~~\text{a.s.}
\end{equation}
In terms of the statistical model $p$, this means that $p(\cdot|\theta=0,\forall\tau)=q(\cdot)$ (a.s.), i.e., \textit{the true distribution $q$ can be realized with parameters $\theta$ alone, without setting $\tau=0$}. Indeed, in the example above, setting parameters $(\theta_1, \theta_2)=0$ in (\ref{eq_intro:eq01}), we have:
\[
p(\cdot|\theta_1=0,\theta_2=0,\forall\tau)=
\left(\tau+\frac{1}{2}\right)\cdot\tilde{p}\left(\cdot~\middle|0+\frac{1}{2}\right)
+\left(\frac{1}{2}-\tau\right)\cdot\tilde{p}\left(\cdot~\middle|0+\frac{1}{2}\right)
=\tilde{p}\left(\cdot~\middle|\frac{1}{2}\right)=q(\cdot)
\]
This property holds true. By taking the expectation in (\ref{eq_intro:eq06}), the Kullback-Leibler information $K(\theta,\tau)$ satisfies:
\begin{equation}\label{eq_intro:eq09}
  K(\theta=0,\forall \tau)=0
\end{equation}
This implies that when treating $\tau$ as a constant, no constant term (i.e., terms involving only $\tau$) arises in the Taylor expansion of $K(\theta)$ at $\theta=0$. Indeed, in the example above, no constant term appeared in (\ref{eq_intro:eq05}).

Geometrically interpreting condition (\ref{eq_intro:eq09}), the set of realizable parameters $\Theta_*$ near the origin extends along the $\tau$-axis as a line. This means that in any neighborhood of the origin, there exist realizable parameters other than the origin, making it impossible to achieve a normal crossing with blow-ups centered at the origin. From this, it is expected that blow-ups centered not at the origin but along the line $\{(\theta_1, \theta_2, \tau) | \theta_1 = \theta_2 = 0\}$ would be effective, as confirmed above where a normal crossing was achieved through a blow-up centered on this line. This demonstrates the effectiveness of applying the method of \cite{kurumadani1} by treating parameter $\tau$ as a constant.

Generalizing this example, this paper extends the results of \cite{kurumadani1} to "cases where some parameters are realizable." As seen in the example, this approach significantly broadens the range of statistical models for which the real log canonical threshold can be calculated. Specifically, if a point $\theta_*$ in the set of realizable parameters $\Theta_*$ is non-singular, it necessarily satisfies (\ref{eq_intro:eq09}), as discussed later. As is well known, most points in the set of realizable parameters $\Theta_*$ are non-singular (for example, see \cite[Remark 2.4]{watanabe1}). Therefore, by using the formula for the real log canonical threshold generalized from the results of \cite{kurumadani1} for non-singular points, it is possible to obtain an upper bound for the learning coefficient. This is the main point of this paper.

\newpage
\section{Main Theorem}
In this paper, we consider a statistical model that has been translated such that the realizable parameter $\theta_*$ is at the origin, without loss of generality.

Let $d_1$ be an integer greater than or equal to one, and $d_2$ a non-negative integer, and consider a statistical model $p(x|\theta,\tau)$ with $d_1 + d_2$ parameters $(\theta_1,\ldots,\theta_{d_1},\tau_1,\ldots,\tau_{d_2})$.

\subsection{Feasibility by Some Parameters}

\begin{dfn}\label{def:yuugen}\leavevmode\par
  A statistical model $p(x|\theta,\tau)$ with parameters $(\theta,\tau)=(\theta_1,\ldots,\theta_{d_1},\tau_1,\ldots,\tau_{d_2})$ is said to be \textit{realizable by $\theta=0$ alone}\footnote{This term is specific to this paper and not generally used.} at $(\theta,\tau)=0$, if for a sufficiently small neighborhood $U$ of $\tau=0$, it holds identically that
  \begin{equation}\label{eq_def:yuugen01}
  \forall \tau\in U,~~ p(\cdot|\theta=0,\tau)=q(\cdot)~~ \text{a.s.}
  \end{equation}
\end{dfn}

\begin{lem}[Characterization of Realizability]\label{lem:zitugen}\leavevmode\par
  Let $U$ be a sufficiently small neighborhood around $\tau=0$. The following three conditions are equivalent:
  \begin{itemize}
    \item [(1)] (\ref{eq_def:yuugen01}), meaning
      \[
      \forall \tau\in U,~~ p(\cdot|\theta=0,\tau)=q(\cdot)~~ \text{a.s.}
      \]
    \item [(2)] 
      \begin{equation}\label{eq_lem:zitugen01}
        \forall \tau\in U,~~ f(\cdot|\theta=0,\tau)=0~~ \text{a.s.}
      \end{equation}
    \item [(3)]
      \[
      \forall \tau\in U,~~ K(\theta=0,\tau)=0
      \]
  \end{itemize}
  Therefore, if condition (\ref{eq_def:yuugen01}) is met, the constant term in the Taylor expansion of $f$ and $K$ at $\theta=0$, when treating $\tau$ as a constant, is zero.
\end{lem}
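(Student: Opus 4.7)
The plan is to close the chain of equivalences (1) $\Leftrightarrow$ (2) $\Leftrightarrow$ (3) and then read off the Taylor-coefficient statement as an immediate consequence. Two of the three implications are essentially formal: (1) $\Leftrightarrow$ (2) follows from the definition $f(x|\theta,\tau)=\log\{p(x|\theta_*)/p(x|\theta,\tau)\}$ together with $q(x)=p(x|\theta_*)$ and $\theta_*=0$ in the translated coordinates, which makes $f(x|0,\tau)=0$ a.s.\ literally the same statement as $p(x|0,\tau)=q(x)$ a.s.; and (2) $\Rightarrow$ (3) is obtained by taking $\mathbb{E}_X$ of (\ref{eq_lem:zitugen01}) to conclude $K(0,\tau)=0$ for every $\tau\in U$.

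The only non-trivial step, which I expect to be the main obstacle, is (3) $\Rightarrow$ (2). Here I would appeal to the standard equality condition in Gibbs' inequality: the quantity
\[
K(\theta,\tau)=\mathbb{E}_X\!\left[\log\frac{p(X|\theta_*)}{p(X|\theta,\tau)}\right]
\]
is the Kullback--Leibler divergence from $p(\cdot|\theta,\tau)$ to the true distribution $q(\cdot)=p(\cdot|\theta_*)$, hence $K(\theta,\tau)\geq 0$, with equality if and only if $p(\cdot|\theta,\tau)=q(\cdot)$ a.s. This equality condition is a textbook consequence of Jensen's inequality applied to the strictly convex function $-\log$. Applied pointwise in $\tau$, the hypothesis $K(0,\tau)=0$ for all $\tau\in U$ forces $p(\cdot|0,\tau)=q(\cdot)$ a.s.\ for every such $\tau$, which by (1) $\Leftrightarrow$ (2) is precisely (2).

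Once the three conditions are established as equivalent, the concluding assertion of the lemma is immediate: when $\tau$ is treated as a constant, the $\theta$-Taylor expansions of $f(\cdot|\theta,\tau)$ and $K(\theta,\tau)$ at $\theta=0$ have constant terms $f(\cdot|0,\tau)$ and $K(0,\tau)$ respectively, both of which vanish on $U$ by (2) and (3). A minor care point worth flagging is that in (1) and (2) the exceptional null set may a priori depend on $\tau$; since every argument above is made pointwise in $\tau\in U$, no uniformity across $\tau$ is required and this causes no issue.
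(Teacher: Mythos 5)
Your argument is correct, and the only place where it diverges from the paper is the one non-trivial implication $(3)\Rightarrow(2)$. The paper proves this directly by invoking the quadratic lower bound $K(\theta,\tau)=\mathbb{E}_X[f(X|\theta,\tau)]\geq\tfrac{1}{2}\mathbb{E}_X[f^2(X|\theta,\tau)]$ (citing \cite[Theorem 6.3]{watanabe1}, valid under the realizability and $L^2$ assumptions made in the paper), so that $K(0,\tau)=0$ forces $\mathbb{E}_X[f^2(X|0,\tau)]=0$ and hence $f(\cdot|0,\tau)=0$ a.s.\ for each fixed $\tau$. You instead use the classical Gibbs/Jensen equality condition for the Kullback--Leibler divergence, deducing $(3)\Rightarrow(1)$ and then passing to $(2)$ via the already-established equivalence $(1)\Leftrightarrow(2)$. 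Both routes are sound and pointwise in $\tau$, so your remark about $\tau$-dependent null sets is well taken. Your version is more elementary and self-contained (it needs no $L^2$ structure on $f$, only strict convexity of $-\log$ and that $p(\cdot|0,\tau)$ integrates to one), whereas the paper's version reuses a quantitative inequality that is part of the singular-learning toolkit and is in the same spirit as the estimates used elsewhere in the argument; the difference is purely one of which standard fact carries the load, and the remaining steps — $(1)\Leftrightarrow(2)$ from the definition of $f$, $(2)\Rightarrow(3)$ by taking expectations, and reading off the vanishing constant terms of the Taylor expansions — coincide with the paper.
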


\begin{proof}\leavevmode\par
  \[
  f(\cdot|\theta=0,\tau)=\log {\frac{q(\cdot)}{p(\cdot|\theta=0,\tau)}}
  \]
  Hence, the equivalence of (1) and (2) is obvious, and $(2)\Rightarrow(3)$ follows by taking expectations. Therefore, we only need to show $(3)\Rightarrow(2)$. In this paper, we assume realizability for the statistical model $p(x|\theta,\tau)$ in the usual sense, which implies
  \[
  K(\theta,\tau)=\mathbb{E}_X\left[f(X|\theta,\tau)\right]\geq\frac{1}{2}\mathbb{E}_X\left[f^2(X|\theta,\tau)\right]
  \]
  according to \cite[Theorem 6.3]{watanabe1}. Setting $\theta=0$ and considering
  \[
  \forall \tau\in U,~~ 0= K(\theta=0,\tau)\geq \frac{1}{2}\mathbb{E}_X\left[f^2(X|\theta=0,\tau)\right]
  \]
  yields $f(X|\theta=0,\forall\tau)=0$ (a.s.).
\end{proof}

\subsection{Main Theorem}

When treating $\tau$ as a constant, Assumption 1 from \cite{kurumadani1} can be reformulated as follows:

\begin{ass}\label{mainass}\leavevmode\par
Assume the following (0)-(3) with $\tau$ as any constant in a sufficiently small neighborhood $U$ of $\tau=0$:
\begin{itemize}
  \item[(0)] Feasibility holds only for $\theta=0$, namely $f(\cdot|\theta=0,\tau)=0$ (a.s.). We also assume that in the case of $d_2=0$, condition (0) is always satisfied.

  \item[(1)] For $r$ parameters $\theta_{1},\ldots,\theta_{r}$, the $r$ random variables
    \[
    \left.\frac{\partial f(X|\theta,\tau)}{\partial\theta_{1}}\right|_{(\theta_1,\ldots,\theta_{d_1})=0}, \ldots, \left.\frac{\partial f(X|\theta,\tau)}{\partial\theta_{r}}\right|_{(\theta_1,\ldots,\theta_{d_1})=0}
    \]
    are linearly independent at $\tau=0$, i.e.,
    \begin{equation}\label{eq_mainass2_01}
    \left.\frac{\partial f(X|\theta,\tau)}{\partial\theta_{1}}\right|_{(\theta_1,\ldots,\theta_{d_1},\tau_1,\ldots,\tau_{d_2})=0}, \ldots, \left.\frac{\partial f(X|\theta,\tau)}{\partial\theta_{r}}\right|_{(\theta_1,\ldots,\theta_{d_1},\tau_1,\ldots,\tau_{d_2})=0}
    \end{equation}
    are linearly independent.
  \item[(2)] For $d_1-r$ parameters $\theta_{r+1},\ldots,\theta_{d_1}$ and an integer $m \geq 1$, derivatives of the log-likelihood ratio function $f(X|\theta,\tau)$ up to $m-1$ with respect to $\theta_{r+1},\ldots,\theta_{d_1}$ are zero with probability one at $(\theta_1,\ldots,\theta_{d_1})=0$, regardless of $\tau$, i.e.,
    \[
    \forall\tau\in U,~~ F_1(X|\theta_{r+1},\ldots,\theta_{d_1},\tau)=\cdots=F_{m-1}(X|\theta_{r+1},\ldots,\theta_{d_1},\tau)=0~~\text{a.s.}
    \]
    The largest such $m$ is redefined as $m$. For convenience, if $m=1$, then $r=d_1$ is assumed.
  \item[(3)] For $m \geq 2$, when \textbf{$\tau = 0$}, for each $(\theta_{r+1},\ldots,\theta_{d_1}) \neq 0$ at $\tau=0$, one of the following holds:
    \begin{itemize}
      \item [(i)] $\left.F_m(X|\theta_{r+1},\ldots,\theta_{d_1})\right|_{\tau=0}=0$ (a.s.)
      \item [(ii)] $\left.F_m(X|\theta_{r+1},\ldots,\theta_{d_1})\right|_{\tau=0}$ and the $r$ random variables from (\ref{eq_mainass2_01}) are linearly independent.
    \end{itemize}
    If $m=1$, (3)(ii) is always satisfied for convenience.
\end{itemize}
\end{ass}

\begin{rem}\label{rem_mainass}\leavevmode\par
  When $d_2=0$, Assumption~\ref{mainass} is the same as \cite[Assumption 1]{kurumadani1}, thus generalizing \cite[Assumption 1]{kurumadani1}.
\end{rem}

\begin{rem}\label{exmainthm}\leavevmode\par
  For the example discussed in Section~\ref{ch_ex}, after performing the variable transformation (\ref{eq_intro:eq07}), it is verified that Assumption~\ref{mainass} is satisfied for $(d_1,r,m)=(2,1,2)$, with $(\theta_1^{\prime},\theta_2)$ corresponding to $(\theta_1, \theta_2)$ and $\tau$ in Assumption~\ref{mainass}.

  First, Assumption~\ref{mainass}(0) is already verified in (\ref{eq_intro:eq06}). Regarding Assumption~\ref{mainass}(1), for parameter $\theta_1^{\prime}$, by substituting $\tau=0$ into (\ref{eq_intro:eq02}),
  \begin{equation}\label{eq_exmainthm01}
  \left.\frac{\partial f}{\partial\theta_1^{\prime}}\right|_{(\theta_1^{\prime},\theta_2,\tau)=0}=2(1-X)
  \end{equation}
  shows that it is non-zero and thus linearly independent.

  Finally, Assumption~\ref{mainass}(2) and (3) are satisfied since for parameter $\theta_2$, from (\ref{eq_intro:eq03}) regardless of $\tau$,
  \[
  \left.\frac{\partial f}{\partial\theta_2}\right|_{(\theta_1^{\prime},\theta_2)=0}=0
  \]
  meets Assumption~\ref{mainass}(2). Moreover, substituting $\tau=0$ into (\ref{eq_intro:eq04}),
  \[
  \left.F_2(X|\theta_2)\right|_{\tau=0}
  =\frac{1}{2}\left.\frac{\partial^2 f}{\partial\theta_2^2}\right|_{(\theta_1^{\prime},\theta_2,\tau)=0}\theta_2^2
  =-4(1-4X+2X^2)\theta_2^2
  \]
  confirms that it is linearly independent from (\ref{eq_exmainthm01}) for any $\theta_2\neq 0$, thus fulfilling Assumption~\ref{mainass}(3)(ii).

  This confirms that Assumption~\ref{mainass} is satisfied for $(d_1,r,m)=(2,1,2)$.\\
\end{rem}

If Assumption~\ref{mainass} is satisfied, even if only some parameters realize the true distribution, the main results of the paper \cite{kurumadani1}, \cite[Main Theorem 1]{kurumadani1} and \cite[Main Theorem 2]{kurumadani1}, are still applicable. The proof will be provided in the next section.

\begin{mainthm}\label{mainthm}\leavevmode\par
Assume that the statistical model $p(x|\theta,\tau)$ with parameters $(\theta,\tau)$ satisfies Assumption~\ref{mainass}(0)(1)(2). Then, treating $\tau$ as any constant in a neighborhood of $\tau=0$, the Taylor expansion of $K(\theta,\tau)$ at $\theta=0$ can be expressed as:
\begin{equation}\label{eq_mainthm01}
K(\theta,\tau)=\frac{1}{2}\mathbb{E}_X\left[\left\{F_1(X|\theta_1,\ldots,\theta_{r})+F_m(X|\theta_{r+1},\ldots,\theta_{d_1})\right\}^2\right]+\text{(higher order terms)}
\end{equation}
where $\tau$ may be included in all terms on the right side. Additionally, (higher order terms) do not include terms involving:
\begin{itemize}
  \item Terms of degree up to $2m$ involving only $\theta_{r+1},\ldots,\theta_{d_1}$
  \item Terms of first degree in $\theta_1,\ldots,\theta_r$ and up to $m$th degree in $\theta_{r+1},\ldots,\theta_{d_1}$
  \item Second-degree terms involving only $\theta_1,\ldots,\theta_r$
\end{itemize}

\end{mainthm}

\begin{rem}\leavevmode\par
  It has already been confirmed in (\ref{eq_intro:eq05}) and (\ref{eq_intro:eq08}) that Main Theorem~\ref{mainthm} holds for the example discussed in Section~\ref{ch_ex}.
\end{rem}

\begin{mainthm}\label{mainthm1}\leavevmode\par
  Consider a statistical model $p(x|\theta,\tau)$ that satisfies Assumption~\ref{mainass}(0)-(3). Consider the following blow-up $g$ centered at the subvariety $W_0:=\{(\theta,\tau)|\theta=0\} \subset \mathbb{R}^{d_1+d_2}$:
  \begin{itemize}
  \item [(a)] Perform one blow-up centered at $W_0$.
  \item [(b)] If the exceptional surface from (a) is $\{\theta_i=0\}$ (where $i=r+1,\ldots,d_1$), perform another blow-up centered at the subvariety $\{(\theta,\tau)|\theta_1=\cdots=\theta_r=\theta_i=0\}$.
  \item [(c)] If the exceptional surface from (b) is $\{\theta_i=0\}$, repeat (b) until the total number of blow-ups reaches $m$.
  \end{itemize}

  When the $m$-th blow-up results in the exceptional surface $\{\theta_i=0\}$, $g=g_i$ can be expressed as follows (for $i=r+1,\ldots,d_1$):
  \[
  g_i\colon(\theta^{\prime}_1,\ldots,\theta^{\prime}_{i-1},\theta_i,\theta^{\prime}_{i+1},\ldots,\theta^{\prime}_{d_1})
  \mapsto
  (\theta_1,\ldots,\theta_{i-1},\theta_i,\theta_{i+1},\ldots,\theta_{d_1});
  \]
  \[
  \theta_1=\theta^m_i\theta_1^{\prime},\ldots,\theta_r=\theta^m_i\theta_r^{\prime},\\
  \theta_{r+1}=\theta_i\theta_{r+1}^{\prime},\ldots,\theta_{i-1}=\theta_i\theta_{i-1}^{\prime},\ 
  \theta_{i+1}=\theta_i\theta_{i+1}^{\prime},\ldots,\theta_{d_1}=\theta_i\theta_{d_1}^{\prime}
  \]

  Then, for a subset $S$ of $U_0:=g^{-1}(W_0)$ in local coordinates $(\theta^{\prime}_1,\ldots,\theta^{\prime}_{i-1},\theta_i,\theta^{\prime}_{i+1},\ldots,\theta^{\prime}_{d_1})$:
  \[
  S:=\bigcup_{i=r+1}^{d_1}
  \left\{
  (\theta^{\prime}_1,\ldots,\theta^{\prime}_{d_1},\tau)\middle|
  \begin{array}{l}
    (\theta^{\prime}_1,\ldots,\theta^{\prime}_{r},\theta_i)=0,~ \tau=0 \\
    F_m(X|\theta^{\prime}_{r+1},\ldots,\theta^{\prime}_{i-1},1,\theta^{\prime}_{i+1},\ldots,\theta^{\prime}_{d_1})= 0 ~\text{a.s.}
  \end{array}
  \right\}\subset U_0 
  \]
  A normal crossing of $K(\theta,\tau)$ is obtained at points in $U_0$ not belonging to $S$, and:
  \[
  \inf_{Q\in U_0\setminus S}\left\{\min_{i=1,\ldots,{d_1}}{\frac{h_i^{(Q)}+1}{k_i^{(Q)}}}\right\}=\frac{d_1-r+rm}{2m}
  \]
  is satisfied (multiplicity is 1). For symbols $k_i^{(Q)},h_i^{(Q)} $ refer to Definition~\ref{LambdaDef}.

  Especially when all parameters $(\theta_{r+1},\ldots,\theta_{d_1})\neq0$ satisfy Assumption~\ref{mainass}(3)(ii), the real log canonical threshold $\lambda_O$ at the origin $O$ is given by:
  \begin{equation}\label{eq:mainthm01}
  \lambda_O=\frac{d_1-r+rm}{2m}
  \end{equation}
  In this case, the set of points in the parameter space $\Theta$ near the origin satisfying $K(\theta,\tau)=0$ is $W_0$.
\end{mainthm}

\begin{rem}\leavevmode\par
  It has been confirmed that Main Theorem~\ref{mainthm1} holds for the example discussed in Section~\ref{ch_ex}, as already noted in Remark~\ref{exmainthm}, where $(d_1,r,m)=(2,1,2)$ satisfies Assumption~\ref{mainass} and every parameter $\theta_2\neq 0$ meets Assumption~\ref{mainass}(3)(ii). Therefore, the real log canonical threshold at the origin of $K(\theta_1,\theta_2,\tau)$ should be:
  \[
  \lambda_O=\frac{2-1+2}{2\cdot 2}=\frac{3}{4}
  \]
  (multiplicity is 1), which indeed aligns with the results derived in Section~\ref{ch_ex}. The center of blow-ups to achieve this also matches $W_0$ as outlined in Main Theorem~\ref{mainthm1}.
\end{rem}

\begin{rem}\leavevmode\par
  Result (\ref{eq:mainthm01}) does not contradict the upper bound (\ref{eq:zyokai02}) obtained in previous research. Indeed,
  \[
  \forall m=1,2,\ldots,~~\frac{d_1-r+rm}{2m}\leq\frac{d_1}{2}
  \]
  holds (equality holds for $m=1$, which also implies $r=d_1$). Thus, this result can also be seen as a refinement of the previous research (\ref{eq:zyokai02}).

  Result (\ref{eq:mainthm01}) indicates that the learning coefficient is characterized by three quantities: $d_1$, the codimension of the manifold of realizable parameters $\Theta_*\in\mathbb{R}^{d_1+d_2}$; $r$, the rank of the Fisher information matrix; and $m$, the smallest number of times the derivative of $f$ with $r$ first derivatives is linearly independent (for the meaning of $d_1$, see the next section).
\end{rem}

\subsection{Proof of Main Theorem~\ref{mainthm}}
\begin{proof}[\textbf{Proof of Main Theorem~\ref{mainthm}}]\leavevmode\par
By fixing $\tau \in U$, the Taylor expansion of the log-likelihood ratio function $f$ at $\theta=0$ has no constant term according to Lemma~\ref{lem:zitugen}, and is expressed as:
\[
f(x|\theta,\tau) = F_1(x|\theta_1,\ldots,\theta_{r}) + F_m(x|\theta_{r+1},\ldots,\theta_{d_1}) + \text{(higher order terms)}
\]
Given that the parameters $(\theta_1,\ldots,\theta_{d_1})$ satisfy Assumption~\ref{mainass} similar to \cite[Main Theorem 1]{kurumadani1}, equation (\ref{eq_mainthm01}) holds.
\end{proof}

\subsection{Proof of Main Theorem~\ref{mainthm1}}
Let's consider $U(\subset\mathbb{R}^{d_2})$ as a sufficiently small neighborhood around $\tau=0$.

\begin{lem}[Characterization of Linear Independence of Random Variables]\label{lem:linind}\leavevmode\par
Assume $n$ random variables $X_1(\tau),\ldots,X_n(\tau)$ are functions of $(x,\tau) \in \chi \times U$, each being an analytic function of $\tau$ for fixed $x \in \chi$, and define $\Sigma(\tau):=\left(\mathbb{E}\left[X_i(\tau)X_j(\tau)\right]\right)_{1\leq i,j\leq n}$.
  \begin{itemize}
    \item [(1)] For any $\tau \in U$, $\Sigma(\tau)$ is non-negative definite, and for each fixed $\tau \in U$, the following two conditions are equivalent:
      \begin{itemize}
        \item [(a)] $\Sigma(\tau_0)$: positive definite
        \item [(b)] $X_1(\tau_0),\ldots,X_n(\tau_0)$ are linearly independent over $\mathbb{R}$ as random variables
      \end{itemize}
    \item [(2)] If $X_1(0),\ldots,X_n(0)$ are linearly independent over $\mathbb{R}$ at $\tau=0$, then for any fixed $\tau_0 \in U$, $X_1(\tau_0),\ldots,X_n(\tau_0)$ are linearly independent over $\mathbb{R}$, satisfying conditions (a) and (b) from (1).
  \end{itemize}
\end{lem}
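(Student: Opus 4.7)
The plan is to treat part (1) as a standard Gram matrix computation and then leverage it in part (2) via a continuity argument on $\det\Sigma(\tau)$.

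For part (1), I would start from the identity
\[
c^\top \Sigma(\tau) c = \sum_{i,j=1}^n c_i c_j \,\mathbb{E}\!\left[X_i(\tau)X_j(\tau)\right] = \mathbb{E}\!\left[\Bigl(\sum_{i=1}^n c_i X_i(\tau)\Bigr)^2\right]
\]
valid for every $c=(c_1,\ldots,c_n)\in\mathbb{R}^n$ and every $\tau\in U$. Non-negativity of the right-hand side immediately gives that $\Sigma(\tau)$ is non-negative definite. For fixed $\tau_0$, the equivalence (a)$\Leftrightarrow$(b) then follows because $c^\top \Sigma(\tau_0) c = 0$ holds iff $\sum_i c_i X_i(\tau_0) = 0$ a.s., so the existence of a non-trivial null vector of $\Sigma(\tau_0)$ is exactly linear dependence of $X_1(\tau_0),\ldots,X_n(\tau_0)$.

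For part (2), the plan is to transport positive definiteness from $\tau=0$ to all nearby $\tau_0$ via analyticity of $\det\Sigma(\tau)$. Since each $X_i(\tau)$ is analytic in $\tau$ for fixed $x$, the product $X_i(\tau)X_j(\tau)$ is also analytic in $\tau$; by the paper's standing assumption that expectation and $\tau$-differentiation commute, every entry $\Sigma_{ij}(\tau)=\mathbb{E}[X_i(\tau)X_j(\tau)]$ is analytic on $U$, and hence so is $\det\Sigma(\tau)$. By part (1), the hypothesis of linear independence at $\tau=0$ gives $\Sigma(0)$ positive definite, hence $\det\Sigma(0)>0$. Shrinking $U$ if necessary, continuity of $\det\Sigma$ ensures $\det\Sigma(\tau_0)>0$ for every $\tau_0\in U$; combined with the non-negative definiteness established in part (1), all eigenvalues of $\Sigma(\tau_0)$ are strictly positive, so $\Sigma(\tau_0)$ is positive definite. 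Applying the equivalence in part (1) at $\tau_0$ yields the linear independence of $X_1(\tau_0),\ldots,X_n(\tau_0)$.

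I do not expect a serious obstacle here; the only real care point is the justification that $\Sigma_{ij}(\tau)$ is analytic (or at least continuous) in $\tau$, which I would handle by explicitly invoking the paper's blanket interchange-of-expectation-and-derivative assumption together with the per-$x$ analyticity of $X_i(\tau)$. Everything else is linear algebra: non-negative definite plus non-vanishing determinant forces positive definite, and part (1) converts this back to the linear-independence statement.
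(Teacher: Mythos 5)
Your proposal is correct and follows essentially the same route as the paper: the Gram-matrix identity $c^\top\Sigma(\tau_0)c=\mathbb{E}[(\sum_i c_iX_i(\tau_0))^2]$ for part (1), and positivity of $\det\Sigma(\tau)$ on a sufficiently small neighborhood of $\tau=0$ (the paper's $D(\tau)$) for part (2). Your added care about analyticity of the entries and about why nonvanishing determinant plus non-negative definiteness gives positive definiteness only fills in details the paper leaves implicit.
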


\begin{proof}\leavevmode\par
If we define $A(\tau):=(X_1(\tau),\ldots,X_n(\tau))^{\top}$, then $\Sigma(\tau)=\mathbb{E}_X\left[A(\tau)A(\tau)^{\top}\right]$. For any vector $\bm{u}(\tau_0):=\left(u_1(\tau_0), \ldots ,u_n(\tau_0)\right)^{\top}$ in $\mathbb{R}^n$,
  \[
    \bm{u}(\tau_0)^{\top}\Sigma(\tau_0)\bm{u}(\tau_0)
    =\bm{u}(\tau_0)^{\top}\mathbb{E}_X\left[A(\tau_0)A(\tau_0)^{\top}\right]\bm{u}(\tau_0)
    =\mathbb{E}_X\left[\left\|A(\tau_0)^{\top}\bm{u}(\tau_0)\right\|^2\right]\geq 0,
  \]
  and
  \[
    \bm{u}(\tau_0)^{\top}\Sigma(\tau_0)\bm{u}(\tau_0)=0 \Leftrightarrow A(\tau_0)^{\top}\bm{u}(\tau_0) =0~~ \text{a.s.}
  \]
  Hence, conditions (a) and (b) are equivalent. If $D(\tau):=\text{det}\left(\Sigma(\tau)\right)$ and $D(0)>0$, then $\forall \tau_0\in U, D(\tau_0)>0$ follows, fulfilling condition (a) from (1).
\end{proof}

\begin{lem}\label{mainlem:teisu}\leavevmode\par
  \begin{itemize}
    \item [(1)]
      When Assumption~\ref{mainass}(1) is satisfied, for any $\tau \in U$, the following holds:
      \begin{align*}
        \mathbb{E}_X\left[F^2_1(X|\theta_{1},\ldots,\theta_{r})\right]=0 &\ \Leftrightarrow\ (\theta_{1},\ldots,\theta_{r})=0
      \end{align*} 
    \item [(2)]
      Let $a$ be a non-zero constant, and suppose Assumption~\ref{mainass}(1)(2)(3) is satisfied.
      \begin{itemize}
        \item [(i)]
          When $(\theta_{r+1},\ldots,\theta_{d_1})$ satisfies Assumption~\ref{mainass}(3)(i), the following equivalence holds at $\tau=0$:
          \begin{align*}
            \begin{split}
              &\left.\mathbb{E}_X\left[
              \left\{F_1(X|\theta_{1},\ldots,\theta_{r})+aF_m(X|\theta_{r+1},\ldots,\theta_{d_1})\right\}^2
              \right]\right|_{\tau=0}=0 \\
              \Leftrightarrow\ & 
              (\theta_{1},\ldots,\theta_{r})=0,~ 
              F_m(X|\theta_{r+1},\ldots,\theta_{d_1})|_{\tau=0}=0~~ \text{a.s.}
            \end{split}
          \end{align*}
        \item [(ii)]
          If $(\theta_{r+1},\ldots,\theta_{d_1}) \neq 0$ satisfies Assumption~\ref{mainass}(3)(ii), then for any $\tau \in U$:
          \[
          \mathbb{E}_X\left[
              \left\{F_1(X|\theta_{1},\ldots,\theta_{r})+aF_m(X|\theta_{r+1},\ldots,\theta_{d_1})\right\}^2
              \right]
          >0
          \]
      \end{itemize}
      In particular, in either case (i) or (ii), if $(\theta_1,\ldots,\theta_r)\neq0$, then for any $\tau \in U$:
      \[
      \mathbb{E}_X\left[
      \left\{F_1(X|\theta_{1},\ldots,\theta_{r})+aF_m(X|\theta_{r+1},\ldots,\theta_{d_1})\right\}^2
      \right]
      >0
      \]
  \end{itemize}
\end{lem}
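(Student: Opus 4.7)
The plan is to express each left-hand side as a quadratic form $v^{\top}\Sigma(\tau)v$ in a suitable coefficient vector $v$, where $\Sigma(\tau)$ is the Gram matrix of a family of random variables that is linearly independent at $\tau=0$ by Assumption~\ref{mainass}. Lemma~\ref{lem:linind}(2) will then propagate that linear independence to every $\tau\in U$, reducing each claim to the standard statement $v^{\top}\Sigma(\tau)v=0\Leftrightarrow v=0$.

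For (1), I would introduce $Y_i(X,\tau):=\partial f(X|\theta,\tau)/\partial\theta_i|_{\theta=0}$ for $i=1,\ldots,r$; each is analytic in $\tau$ for fixed $X$. Assumption~\ref{mainass}(1) is precisely the linear independence of $Y_1(\cdot,0),\ldots,Y_r(\cdot,0)$, so Lemma~\ref{lem:linind}(2) yields positive definiteness of $\Sigma(\tau)$ for every $\tau\in U$. Since $F_1(X|\theta_1,\ldots,\theta_r)=\sum_{i=1}^{r}\theta_i Y_i(X,\tau)$, the equivalence in (1) is read off directly.

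For (2)(i), condition (3)(i) gives $F_m|_{\tau=0}=0$ a.s., so at $\tau=0$ we have $F_1+aF_m=F_1$ as random variables, and the stated equivalence reduces to part (1) applied at $\tau=0$. For (2)(ii), I would enlarge the family by $Y_{r+1}(X,\tau):=F_m(X|\theta_{r+1},\ldots,\theta_{d_1})$; for each fixed $(\theta_{r+1},\ldots,\theta_{d_1})$, $Y_{r+1}$ is analytic in $\tau$ for fixed $X$, being a polynomial in the frozen $\theta_j$ whose coefficients are $m$-th order $\theta$-derivatives of $f$ at $\theta=0$, themselves analytic in $\tau$. Assumption~\ref{mainass}(3)(ii) supplies linear independence of $Y_1(\cdot,0),\ldots,Y_{r+1}(\cdot,0)$, so Lemma~\ref{lem:linind}(2) promotes it to every $\tau\in U$. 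Then $\mathbb{E}_X\bigl[(F_1+aF_m)^2\bigr]$ is the quadratic form in the coefficient vector $(\theta_1,\ldots,\theta_r,a)$ against the resulting positive-definite Gram matrix; since $a\neq 0$ the coefficient vector is nonzero, yielding strict positivity.

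For the ``in particular'' clause with $(\theta_1,\ldots,\theta_r)\neq 0$, case (ii) is already subsumed by (2)(ii). In case (i), the identity $F_1+aF_m=F_1$ at $\tau=0$ combined with (1) gives $\mathbb{E}_X\bigl[(F_1+aF_m)^2\bigr]\big|_{\tau=0}>0$; since this expectation is analytic in $\tau$, strict positivity persists on a neighborhood of $\tau=0$, which I would absorb into $U$ by shrinking it if necessary. The one step I expect to need genuine care is verifying the analyticity-in-$\tau$ hypothesis of Lemma~\ref{lem:linind} for the enlarged family, in particular for $F_m$; this is however immediate from the standing assumption that $f(x|\theta,\tau)$ is analytic in $(\theta,\tau)$ near the origin.
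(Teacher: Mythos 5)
Your treatment of (1), (2)(i) and (2)(ii) is essentially the paper's own argument: linear independence at $\tau=0$ (Assumption~\ref{mainass}(1), resp.\ Assumption~\ref{mainass}(3)(ii) for the family enlarged by $F_m$) is propagated to every $\tau\in U$ by Lemma~\ref{lem:linind}(2), and vanishing of the expectation is then equivalent to the a.s.\ vanishing of $\sum_{k}\theta_k\,\partial f/\partial\theta_k|_{\theta=0}+aF_m$, which is impossible for a nonzero coefficient vector $(\theta_1,\ldots,\theta_r,a)$; the paper words this as a contradiction rather than as positive definiteness of a Gram matrix, but the content is identical, and your reduction of (2)(i) at $\tau=0$ to part (1) is exactly what the paper does.

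The gap is in your argument for the final ``in particular'' clause in case (i) when $\tau\neq0$. The quantity you need to be positive depends on both $\tau$ and the frozen $(\theta_1,\ldots,\theta_{d_1})$, and the lemma's $U$ is fixed in advance, uniformly in $\theta$. Positivity at $\tau=0$ plus analyticity in $\tau$ only gives a neighborhood of $\tau=0$ that depends on $\theta$, and these neighborhoods cannot be absorbed into a single shrunken $U$: as $(\theta_1,\ldots,\theta_r)\to0$ with $(\theta_{r+1},\ldots,\theta_{d_1})$ fixed in case (i), the value at $\tau=0$ equals $\mathbb{E}_X[F_1^2]\big|_{\tau=0}\to0$ while the $\tau$-variation is governed by $F_m$, so the radius of positivity can shrink to zero. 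More fundamentally, Assumption~\ref{mainass}(3)(i) constrains $F_m$ only at $\tau=0$; nothing in the hypotheses prevents $F_m(\cdot\,,\tau)$ for $\tau\neq0$ from lying in the $\mathbb{R}$-span of the first derivatives (the paper's Section 3 discusses precisely such relations, cf.\ (\ref{eq_mainrem01}), with coefficients vanishing at $\tau=0$), in which case $F_1+aF_m=0$ a.s.\ can occur at some $\tau\neq0$ with $(\theta_1,\ldots,\theta_r)\neq0$. So a per-$\theta$ continuity/shrinking argument cannot establish the clause as stated: case (ii) is indeed subsumed by (2)(ii), but in case (i) your method only delivers positivity at $\tau=0$ — which, for what it is worth, is also all that the paper's own proof (silent on this clause) establishes.
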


\begin{proof}\leavevmode\par

\begin{itemize}
  \item [(1)]
    Given that the $r$ random variables, functions of $\tau$,
    \begin{equation}\label{eq:mainlemteisu01}
    \left.\frac{\partial f(X|\theta,\tau)}{\partial\theta_{1}}\right|_{(\theta_{1},\ldots,\theta_{d_1})=0}
    ,\ldots,
    \left.\frac{\partial f(X|\theta,\tau)}{\partial\theta_{r}}\right|_{(\theta_{1},\ldots,\theta_{d_1})=0}
    \end{equation}
    are linearly independent at $\tau=0$ according to Assumption~\ref{mainass}(1), and by Lemma~\ref{lem:linind}(2), we fix any $\tau_0 \in U$ to conclude that (\ref{eq:mainlemteisu01}) is linearly independent over $\mathbb{R}$.
    Thus, we obtain the following equivalence:
    \begin{align*}
      \mathbb{E}_X\left[F^2_1(X|\theta_{1},\ldots,\theta_{r})\right]=0 & \Leftrightarrow ~F_1(X|\theta_1,\ldots,\theta_r)=0~~\text{a.s.}\\
      & \Leftrightarrow ~\sum_{k=1}^r \theta_k
      \left.\frac{\partial f(X|\theta,\tau_0)}{\partial\theta_{k}}\right|_{(\theta_{1},\ldots,\theta_{d_1})=0}=0~~\text{a.s.}\\
      & \Leftrightarrow ~(\theta_{1},\ldots,\theta_{r})=0
    \end{align*} 

  \item[(2)]
    When Assumption~\ref{mainass}(3)$(i)$ is met, $F_m(X|\theta_{r+1},\ldots,\theta_{d_1})|_{\tau=0}=0$ (a.s.) holds, and thus using (1), $(i)$ is demonstrated.
    Therefore, it only remains to demonstrate when Assumption~\ref{mainass}(3)$(ii)$ is met. Given that $a\cdot F_m(X|\theta_{r+1},\ldots,\theta_{d_1})$ and (\ref{eq:mainlemteisu01}) are linearly independent at $\tau=0$,
    by Lemma~\ref{lem:linind}(2), for any fixed $\tau_0 \in U$, $a\cdot F_m(X|\theta_{r+1},\ldots,\theta_{d_1})$ and (\ref{eq:mainlemteisu01}) remain linearly independent over $\mathbb{R}$. Thus, when $\tau = \tau_0$, we have
    \begin{align*}
    &\mathbb{E}_X\left[
    \left\{F_1(X|\theta_{1},\ldots,\theta_{r})+aF_m(X|\theta_{r+1},\ldots,\theta_{d_1})\right\}^2
    \right]
    =0\\
     \Leftrightarrow & ~\sum_{k=1}^r \theta_k
      \left.\frac{\partial f(X|\theta,\tau_0)}{\partial\theta_{k}}\right|_{(\theta_{1},\ldots,\theta_{d_1})=0}
      +a\cdot F_m(X|\theta_{r+1},\ldots,\theta_{d_1})=0~~\text{a.s.}
    \end{align*}
      This contradicts the linear independence over $\mathbb{R}$ of $a\cdot F_m(X|\theta_{r+1},\ldots,\theta_{d_1})$ and (\ref{eq:mainlemteisu01}). Therefore, this equivalence does not hold for any $\tau \in U$, and
      \[
      \mathbb{E}_X\left[
    \left\{F_1(X|\theta_{1},\ldots,\theta_{r})+aF_m(X|\theta_{r+1},\ldots,\theta_{d_1})\right\}^2
    \right]>0
      \]
      is obtained.
\end{itemize}
\end{proof}

\begin{proof}[\textbf{Proof of Main Theorem~\ref{mainthm1}}]\leavevmode\par
For any $\tau_0 \in U$, in the Taylor expansion of $K(\theta,\tau_0)$ at $\theta=0$, Lemma~\ref{lem:zitugen} confirms that there is no constant term.

Furthermore, since the parameters $(\theta_1,\ldots,\theta_{d_1})$ meet Assumption~\ref{mainass} similar to \cite[Main Theorem 2]{kurumadani1}, applying Lemma~\ref{mainlem:teisu} in place of \cite[Lemma 2.2]{kurumadani1} in the proof of \cite[Main Theorem 2]{kurumadani1} demonstrates the theorem.
\end{proof}

\newpage
\section{Variable Transformations to Satisfy Assumption~\ref{mainass}}

\subsection{Assumption~\ref{mainass}(0)}
The example treated in Section~\ref{ch_ex} inherently satisfied Assumption~\ref{mainass}(0), but this is not generally guaranteed. However, if the realizable parameter $\theta_* \in \Theta_* \subset \mathbb{R}^{d_1+d_2}$ is a nonsingular point of the $d_2$-dimensional analytic manifold $\Theta_*$, it naturally satisfies Assumption~\ref{mainass}(0).

\begin{dfn}\cite[Definition 2.6]{watanabe1}\leavevmode\par
A point $\theta_* \in \Theta_*$ is considered nonsingular if there exists open sets $U, V \subset \mathbb{R}^{d_1+d_2}$ and an analytic isomorphism $\varphi: U \rightarrow V$ such that:
\[
\varphi(\Theta_* \cap U) = \{(0,\ldots,0,x_1,\ldots,x_{d_2})|x_i\in\mathbb{R}\} \cup V
\]  
\end{dfn}

\begin{rem}\leavevmode\par
A concrete method to construct $\varphi$ involves solving the $d_1$ defining equations of $\Theta_*$ at the point $\theta_*$ with respect to $\theta_1,\ldots,\theta_{d_1}$, a direct consequence of the Implicit Function Theorem \cite[Remark 2.2]{watanabe1}.
\end{rem}

\begin{ex}\leavevmode\par
Consider a statistical model with parameters $(a,b)$:
\[
p(x|a,b) = \frac{1}{1 + e^{-(a+1)(b+1)x}},\ \  q(x) = \frac{1}{1 + e^{-x}}~~ (x=1,-1)
\]
Clearly,
\[
\Theta_* = \left\{
(a,b) ~ | ~ (a+1)(b+1) = 1
\right\} \subset \mathbb{R}^2
\]
constitutes a nonsingular one-dimensional manifold. For example, at the origin, solving the defining equation of $\Theta_*$ for $a$ yields:
\[
a = -\frac{b}{b+1}
\]
Thus, the coordinate transformation $(a,b) \mapsto (\tilde{a}, b)$ that satisfies Assumption~\ref{mainass}(0) is:
\[
\tilde{a} := a + \frac{b}{b+1}
\]
Indeed, in the neighborhood of the origin $(\tilde{a}, b) = (0, 0)$, we find
\[
\Theta_* = \left\{(\tilde{a}, b)~|~\tilde{a} = 0, \forall b\right\}
\]
is satisfied, and $\Theta_*$ is defined by the single parameter $\tilde{a}$ near the origin.

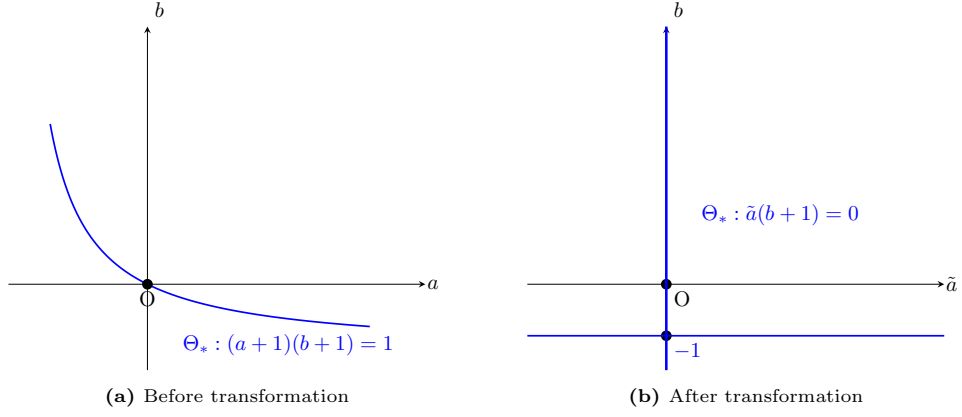
\begin{figure}[ht]
\begin{minipage}[b]{0.48\columnwidth}
  \centering

\scalebox{0.8}[0.8]{
  \begin{tikzpicture}
    \begin{axis}[
        axis lines=center,
        ticks=none,
        xmin=-5, xmax=10, ymin=-5, ymax=15,
        xlabel=$a$, ylabel=$b$, 
        every axis x label/.style={
          at={(ticklabel* cs:1.05)},
          anchor=east,
        },
        every axis y label/.style={
          at={(ticklabel* cs:1.05)},
          anchor=west,
        }
      ]
      \addplot [blue,thick,samples=200,domain=-3.5:8] {20/(x+5)-4 };
      \draw(axis cs:0,0)node[below]{O};
      \draw[blue](axis cs:1,-3.5)node[right]{$\Theta_*: (a+1)(b+1)=1$};
      \fill (axis cs:0,0) circle (2.5pt);
    \end{axis}
  \end{tikzpicture}
}
  \subcaption{Before transformation}
\end{minipage}
\hspace{0.03\columnwidth}
\begin{minipage}[b]{0.48\columnwidth}
  \centering

\scalebox{0.8}[0.8]{
  \begin{tikzpicture}
    \begin{axis}[
        axis lines=center,
        ticks=none,
        xmin=-5, xmax=10, ymin=-5, ymax=15,
        xlabel=$\tilde{a}$, ylabel=$b$, 
        every axis x label/.style={
          at={(ticklabel* cs:1.05)},
          anchor=east,
        },
        every axis y label/.style={
          at={(ticklabel* cs:1.05)},
          anchor=west,
        }
      ]
      \draw(axis cs:0,0)node[below right]{O};
      \draw[blue](axis cs:0,-3)node[below right]{$-1$};
      \draw[blue](axis cs:1,3)node[above right]{$\Theta_*:\tilde{a}(b+1)=0$};
      \fill (axis cs:0,0) circle (2.5pt);
      \fill (axis cs:0,-3) circle (2.5pt);
      \draw [blue,very thick] (axis cs:0,-5) -- (axis cs:0,15);
      \draw [blue,thick] (axis cs:-5,-3) -- (axis cs:10,-3);
    \end{axis}
  \end{tikzpicture}
}
\subcaption{After transformation}
\end{minipage}
\caption{Changes in the set of realizable parameters $\Theta_*$ before and after coordinate transformation}\label{fig:henkan}
\end{figure}

\end{ex}

\subsection{Assumption~\ref{mainass}(1)-(3)}
To satisfy Assumption~\ref{mainass}(1)-(3), variable transformations are derived from linear dependencies, similar to those described in \cite{kurumadani1}. The $r$ parameters that satisfy Assumption~\ref{mainass}(1) are those that form a basis of the $\mathbb{R}$ vector space composed of the first derivatives of $f$ with $\tau=0$, akin to the approach in \cite{kurumadani1}.

For Assumption~\ref{mainass}(2), attention is needed in this paper's context as follows:

In the following, we denote by $U(\subset\mathbb{R}^{d_2})$ a sufficiently small neighborhood of $\tau=0$, and we define $L$ as the set (commutative ring) of all analytic functions on $U$, i.e.,
\[
L := \{\varphi(\tau) : U \rightarrow \mathbb{R} : \text{analytic}\}.
\]

Generally, for elements $v_1,\ldots,v_n$ in a module $M$ over a commutative ring $L$, being linearly independent over $L$ means that:
\[
l_1,\ldots,l_n \in L,~~\sum_{i=1}^n l_iv_i=0 \Rightarrow l_1=\cdots=l_n=0 \in L
\]

If Assumption~\ref{mainass}(1) is satisfied, it also implies linear independence over the commutative ring $L$.

To satisfy Assumption~\ref{mainass}(2), it is necessary to construct coordinate transformations from linear dependencies over the ring $L$. For example, using $\varphi_1(\tau),\ldots,\varphi_r(\tau) \in L$, if we can express:
\begin{equation}\label{eq_mainrem01}
\left.
\frac{\partial^{i_{r+1}+\cdots +i_{d_1}} f}{\partial\theta_{r+1}^{i_{r+1}}\cdots\partial\theta_{d_1}^{i_{d_1}}}
\right|_{(\theta_1,\ldots,\theta_{d_1})=0}
=\sum_{k=1}^r\varphi_k(\tau)
  \left.\frac{\partial f}{\partial\theta_k}\right|_{(\theta_1,\ldots,\theta_{d_1})=0}
  \text{ a.s.}
\end{equation}
Then, the coordinate transformation
\[
\theta_k^{\prime}=\theta_k+\frac{\varphi_k(\tau)}{i_{r+1}!\cdots i_{d_1}!}\theta_{r+1}^{i_{r+1}}\cdots\theta_{d_1}^{i_{d_1}}
\]
for $k=1,\ldots,r$ ensures that after the transformation:
\begin{equation*}
\left.
\frac{\partial^{i_{r+1}+\cdots +i_{d_1}} f}{\partial\theta_{r+1}^{i_{r+1}}\cdots\partial\theta_{d_1}^{i_{d_1}}}
\right|_{(\theta_1^{\prime},\ldots,\theta_r^{\prime},\theta_{r+1},\ldots\theta_{d_1})=0}
=0 \text{ a.s.}
\end{equation*}
Thus, one should repeatedly apply such coordinate transformations each time a linear relation (\ref{eq_mainrem01}) is identified from the lower order terms.

\begin{rem}\leavevmode\par
If instead of the relation in (\ref{eq_mainrem01}), we have a relation using $\varphi \in L, \varphi(0)=0$:
\[
\varphi(\tau)\left.
\frac{\partial^{i_{r+1}+\cdots +i_{d_1}} f}{\partial\theta_{r+1}^{i_{r+1}}\cdots\partial\theta_{d_1}^{i_{d_1}}}
\right|_{(\theta_1,\ldots,\theta_{d_1})=0}
=\sum_{k=1}^r\varphi_k(\tau)
  \left.\frac{\partial f}{\partial\theta_k}\right|_{(\theta_1,\ldots,\theta_{d_1})=0}
  \text{ a.s.}
\]
This form cannot be transformed into the format of (\ref{eq_mainrem01}) because $\varphi^{-1} \notin L$. Such a situation, where $(\theta_1,\theta_2,\tau)=0$ becomes a singular point in the set of realizable parameters and necessitates further blow-ups, does not occur in the discussions in \cite{kurumadani1}, which are based on the vector space over the field $\mathbb{R}$.

A situation like this may occur, for example, when considering a random variable $X$ that follows a binomial distribution Bin$(2,\theta)$ with parameter $\theta (0 < \theta < 1)$:
  \[
  \tilde{p}(X=x|\theta)=\binom{2}{x}\theta^x(1-\theta)^{2-x}=
  \begin{cases}
    (1-\theta)^2, & (x=0)\\
    2\theta(1-\theta), & (x=1)\\
    \theta^2, & (x=2)
  \end{cases}
  \]

Consider the following mixed distribution model with parameters $(\theta_1, \theta_2, \tau)$:
  \begin{align*}
  p(X=x|\theta_1, \theta_2, \tau):=
  \tau\cdot\tilde{p}\left(X=x\middle|\theta_2+\frac{1}{2}\right)
  +(1-\tau)\cdot\tilde{p}\left(X=x\middle|\theta_1+\frac{1}{2}\right)&\\
  (x=0,1,2)\notag&
  \end{align*}

If the true distribution is $\tilde{p}(X|1/2)$, this model realizes the true distribution at the point $(\theta_1, \theta_2, \tau)=0$. Consider the derivatives of $f$ at this point:
  \[
  p(\cdot|\theta_1=0, \theta_2=0, \forall\tau)
  =\tau\cdot\tilde{p}\left(\cdot\middle|\frac{1}{2}\right)+(1-\tau)\cdot\tilde{p}\left(\cdot\middle|\frac{1}{2}\right)
  =q(\cdot)
  \]
shows that this statistical model realizes the true distribution only at $\theta=0$. Therefore, treat $\tau$ as an arbitrary constant in the neighborhood $U$ of 0:
  \begin{gather*}
    \left.\frac{\partial f}{\partial\theta_1}\right|_{(\theta_1, \theta_2)=0} = 4(1-\tau)(1-X), ~~ 
    \left.\frac{\partial f}{\partial\theta_2}\right|_{(\theta_1, \theta_2)=0} = 4\tau(1-X)
  \end{gather*}  
  These are linearly dependent as:
  \[
  \left.\frac{\partial f}{\partial\theta_2}\right|_{(\theta_1, \theta_2)=0} = 
  \frac{\tau}{1-\tau}\cdot\left.\frac{\partial f}{\partial\theta_1}\right|_{(\theta_1, \theta_2)=0}
  \]
  but
  \[
  \left.\frac{\partial f}{\partial\theta_1}\right|_{(\theta_1, \theta_2)=0} = 
  \frac{1-\tau}{\tau}\cdot\left.\frac{\partial f}{\partial\theta_2}\right|_{(\theta_1, \theta_2)=0}
  \]
  is inappropriate because the function $(1-\tau)/\tau$ is not defined near $\tau=0$.

Additionally, in this example, performing the coordinate transformation to eliminate the first derivative with respect to $\theta_2$,
  \[
  \theta_1^{\prime}:=\theta_1+\frac{\tau}{1-\tau}\theta_2
  \]
  results in
  \[
  \left.
  \frac{\partial^2 f}{\partial\theta_2^2}
  \right|_{(\theta_1^{\prime}, \theta_2)=0}
  =-\frac{8\tau}{1-\tau}(1-4X+2X^2)
  \]
  and becomes zero at $\tau=0$, satisfying Assumption~\ref{mainass}(3)(i) but not (ii). Therefore, while blow-ups can proceed under Main Theorem~\ref{mainthm1}, applying Main Theorem~\ref{mainthm1} directly to compute the real log canonical threshold at the origin is not possible.

This example illustrates that just applying Main Theorem~\ref{mainthm1} once is insufficient for calculating the real log canonical threshold at the origin $(\theta_1, \theta_2, \tau)=0$, which is a singular point in the set of realization parameters, necessitating further blow-ups.
\end{rem}

\newpage
\section{Example}
In some cases, using Main Theorem~\ref{mainthm1}, we can calculate the real log canonical thresholds at non-singular points in the set of realizable parameters, $\Theta_*$. 

\subsection{Mixed Distribution Model}

\begin{ex}\label{ex_kongou02}\leavevmode\par
  Let $M(\geq 2)$ be a constant, and $\tilde{p}(x|\theta)$ represents a binomial distribution Bin$(M,\theta)$, i.e.,
  \[
  \tilde{p}(X=x|\theta)=\binom{M}{x}\theta^x(1-\theta)^{M-x}~~(x=0,1,\ldots,M)
  \]

  Consider a mixture of $H(\geq 2)$ such probability distributions $\tilde{p}(x|\theta)$:
  \begin{equation}\label{eq:kongou00}
  p(x |\theta_1,\ldots,\theta_{H},\tau_1,\ldots,\tau_{H-1}):=
  \sum_{i=1}^{H-1}\tau_i\cdot\tilde{p}\left(x\middle|\theta_{i}\right) 
  + \left(1-\sum_{i=1}^{H-1}\tau_i\right)\cdot\tilde{p}\left(x\middle|\theta_H\right)
  \end{equation}
  Here, parameters $\tau_i$ represent mixing proportions $(0\leq\tau_i\leq1,\ \sum_{i=1}^{H-1}\tau_i\leq 1)$, and the parameter space is $\Theta=[0,1]^H\times[0,1]^{H-1}$.

  When the number of mixture components in the true distribution $q(x)$, $H_0$, satisfies $H_0 \leq \text{min}\{H, M/2\}$, the upper bound for the learning coefficient of the mixed distribution model (\ref{eq:kongou00}) is given by
  \[
  \frac{3H_0+H-2}{4}
  \]
  (multiplicity is 1).
\end{ex}

\begin{proof}\leavevmode\par
The true distribution $q(x)$ can be represented using $H_0-1$ parameters $\{\tau_{i*}\}$ and $H_0$ mutually distinct parameters $\{\theta_{i*}\}$:
\begin{align*}
  q(x)=\sum_{i=1}^{H_0-1}\tau_{i*}\cdot\tilde{p}(x|\theta_{i*}) 
    + \left(1-\sum_{i=1}^{H_0-1}\tau_{i*}\right)\cdot\tilde{p}(x|\theta_{H_0*})
\end{align*}
where $\tau_{i*}>0, \sum_{i=1}^{H_0-1}\tau_{i*}<1$. For $H_0=1$, represent it as:
\[
  q(x)=\tilde{p}(x|\theta_{*})
\]
Note that the statistical model (\ref{eq:kongou00}) realizes $q(x)$ at the point:
\begin{align*}
  (\theta_1,\ldots,\theta_{H_0-1},\theta_{H_0},\ldots,\theta_{H-1},\theta_{H})
    &= (\theta_{1*},\ldots,\theta_{H_0-1*},\theta_{H_0*},\ldots,\theta_{H_0*},\theta_{H_0*})\\
  (\tau_1,\ldots,\tau_{H_0-1},\tau_{H_0},\ldots,\tau_{H-1})
    &= (\tau_{1*},\ldots,\tau_{H_0-1*},\forall\tau_{H_0},\ldots,\forall\tau_{H-1})
\end{align*}
After translating to:
\[
p(x |\theta,\tau)=\sum_{i=1}^{H_0-1}(\tau_i+\tau_{i*})\tilde{p}(x|\theta_i+\theta_{i*})
+\sum_{i=H_0}^{H-1}\tau_i\tilde{p}(x|\theta_i+\theta_{H_0*})
+(1-\sum_{i=1}^{H_0-1}\tau_{i*}-\sum_{i=1}^{H-1}\tau_i)\tilde{p}(x|\theta_H+\theta_{H_0*})
\]
and considering any point satisfying:
\begin{equation}\label{eq:teisu01}
(\theta_1,\ldots,\theta_{H_0-1},\theta_{H_0},\ldots,\theta_{H-1},\theta_{H},\tau_1,\ldots,\tau_{H_0-1})=0    
\end{equation}
where the remaining parameters $(\tau_{H_0},\ldots,\tau_{H-1})$ are treated as constants.
Here, we consider when $(\tau_{H_0}, \ldots, \tau_{H-1})$ satisfy the following condition:
\begin{equation}\label{eq:teisu04}
\tau_{H_0}, \ldots, \tau_{H-1} > 0,\ \ \sum_{k=1}^{H_0-1}\tau_{k*} + \sum_{k=H_0}^{H-1}\tau_k < 1
\end{equation}
The first derivative of the log-likelihood ratio function $f$ at point (\ref{eq:teisu01}) is:
\begin{align*}
   \frac{\partial f}{\partial\theta_i} &= -\frac{1}{q(x)}
   \left\{\begin{array}{ll}
     \tau_{i*}\frac{\partial \tilde{p}}{\partial\theta}(x|\theta_{i*}) & i=1,\ldots,H_0-1\\
     \tau_{i}\frac{\partial \tilde{p}}{\partial\theta}(x|\theta_{H_0*}) & i=H_0,\ldots,H-1\\
     (1-\sum_{k=1}^{H_0-1}\tau_{k*}-\sum_{k=H_0}^{H-1}\tau_{k})\frac{\partial \tilde{p}}{\partial\theta}(x|\theta_{H_0*}) & i=H
   \end{array}\right.\\
   \frac{\partial f}{\partial\tau_i} &= -\frac{\tilde{p}(x|\theta_{i*}) - \tilde{p}(x|\theta_{H_0*})}{q(x)}, \ \ \ \ \ \ \ \ \ \ \ \ \ \ \ \ \ \ \ \ \ \ \ \ \ \ \ \ \ \ \ \ \ \ \ \ \ \ \ i=1,\ldots,H_0-1
\end{align*}
Here, to satisfy the linear dependency
\[
\frac{\partial f}{\partial\theta_i} = \frac{\tau_i}{1-\sum_{k=1}^{H_0-1}\tau_{k*}-\sum_{k=H_0}^{H-1}\tau_{k}}
\cdot \frac{\partial f}{\partial\theta_H}
,~ ~ (i=H_0,\ldots,H-1)
\]
we perform a coordinate transformation
\[
\theta_H^{\prime} := \theta_H + \frac{\sum_{i=H_0}^{H-1}\tau_i\theta_i}{1-\sum_{k=1}^{H_0-1}\tau_{k*}-\sum_{k=H_0}^{H-1}\tau_{k}}
\]
which results in
\[
\frac{\partial f}{\partial\theta_{H_0}} = \cdots = \frac{\partial f}{\partial\theta_{H-1}} = 0
\]
and these second derivatives are
$\boldsymbol{\theta} := (\theta_{H_0} \cdots \theta_{H-1})^{\top} \in \mathbb{R}^{H-H_0}$
given by
\begin{align*}
F_2(x, \boldsymbol{\theta})
=& \sum_{\substack{i_{H_0}+\cdots+i_{H-1}=2\\ \{i_{H_0},\ldots,i_{H-1}\}\subset\mathbb{Z}_{\geq 0}}}
  \frac{1}{i_{H_0}!\cdots i_{H-1}!}\times
  \left.\frac{\partial^2 f}{\partial\theta_{H_0}^{i_{H_0}}\cdots\partial\theta_{H-1}^{i_{H-1}}}
  \right|_{\theta=0}
  \times\theta_{H_0}^{i_{H_0}}\cdots\theta_{H-1}^{i_{H-1}}\\
=& \frac{-1}{1-\sum_{k=1}^{H_0-1}\tau_{k*}-\sum_{k=H_0}^{H-1}\tau_{k}}
  \cdot \frac{1}{2q(x)} \cdot
  \frac{\partial^2\tilde{p}}{\partial\theta^2}(x|\theta_{H_0*})
  \cdot \boldsymbol{\theta}^{\top}\Sigma\boldsymbol{\theta}
\end{align*}
where
\begin{align*}
 \Sigma &:= (\sigma_{i,j})_{H_0\leq i,j\leq H-1}\\
 \sigma_{i,j} &= \left\{\begin{array}{ll}
   \tau_i(1-\sum_{k=1}^{H_0-1}\tau_{k*}-\sum_{k=H_0}^{H-1}\tau_{k}+\tau_i) & i=j\\
   \tau_i\tau_j & i\neq j
   \end{array}\right.
\end{align*}
Under condition (\ref{eq:teisu04}), by \cite[Lemma 4.2]{kurumadani1}, $\Sigma$ is a positive definite matrix, and there exist no non-trivial parameters $\boldsymbol{\theta}$ satisfying $F_2(x, \boldsymbol{\theta}) = 0$.

By Lemma~\ref{lem:kongou01}, the following equation holds:
\begin{equation}\label{eq:kongou01}
\tilde{p}(x|\theta_{1*}),\ldots,\tilde{p}(x|\theta_{H_0*}),\ 
\frac{\partial\tilde{p}}{\partial\theta}(x|\theta_{1*}),\ldots,\frac{\partial\tilde{p}}{\partial\theta}(x|\theta_{H_0*}),\ 
\frac{\partial^2\tilde{p}}{\partial\theta^2}(x|\theta_{H_0*}):~\text{linearly independent}
\end{equation}
so for any parameter $\boldsymbol{\theta} (\neq 0)$,
\[
\frac{\partial f}{\partial\theta_1},\ldots,\frac{\partial f}{\partial\theta_{H_0-1}},
\frac{\partial f}{\partial\theta_{H}},
\frac{\partial f}{\partial\tau_1},\ldots,\frac{\partial f}{\partial\tau_{H_0-1}},
F_2(x, \boldsymbol{\theta})
\]
are linearly independent, and for $(d_1,r,m)=(H+H_0-1,2H_{0}-1,2)$, applying Main Theorem~\ref{mainthm1} yields the real log canonical threshold
\[
\lambda = \frac{d_1-r+rm}{2m} = \frac{3H_{0}+H-2}{4}
\]
(multiplicity is 1).\\
\end{proof}

In the following lemma, for simplicity, the previously used symbol $H_0$ is denoted by $N$, and $\theta_{i*}$ is denoted by $\theta_i$.

\begin{lem}\label{lem:kongou01}\leavevmode\par
Let $M(\geq 2)$ be an integer, $\tilde{p}(x|\theta)$ represent a binomial distribution Bin$(M,\theta)$ parameterized by $\theta$, and $N(\leq M/2)$ be an integer greater than or equal to one. Then, for a set of $N$ distinct parameters $\{\theta_1,\ldots,\theta_N\}\subset[0,1]$, the following random variables are linearly independent:
  \[
  \tilde{p}(x|\theta_{1}),\ldots,\tilde{p}(x|\theta_{N}),\ 
\frac{\partial\tilde{p}}{\partial\theta}(x|\theta_{1}),\ldots,\frac{\partial\tilde{p}}{\partial\theta}(x|\theta_{N}),\ 
\frac{\partial^2\tilde{p}}{\partial\theta^2}(x|\theta_{N})
  \]

\end{lem}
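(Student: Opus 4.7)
The plan is to reformulate linear independence of the random variables on $\{0,1,\ldots,M\}$ as linear independence of their probability generating functions in $\mathbb{R}[y]_{\leq M}$, and then to recognize the resulting polynomial identity as a Hermite-interpolation condition on polynomials in the parameter $\theta$. The generating function for the binomial model is $\sum_{x=0}^M \tilde{p}(x|\theta)y^x = (1-\theta+\theta y)^M = (1+\theta z)^M$ where $z:=y-1$, and the map $g\mapsto\sum_x g(x)y^x$ is a linear isomorphism from $\mathbb{R}^{M+1}$ onto $\mathbb{R}[y]_{\leq M}=\mathbb{R}[z]_{\leq M}$. Since differentiation in $\theta$ commutes with this transform, it is enough to show that the $2N+1$ polynomials
\[
(1+\theta_i z)^M,\quad Mz(1+\theta_i z)^{M-1}\ (i=1,\ldots,N),\quad M(M-1)z^2(1+\theta_N z)^{M-2}
\]
are linearly independent in $\mathbb{R}[z]_{\leq M}$.

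Next, assuming a vanishing linear combination with coefficients $\{a_i\},\{b_i\},c$, I would extract the coefficient of $z^j$ for each $j=0,1,\ldots,M$. Using the identities $M\binom{M-1}{j-1}=j\binom{M}{j}$ and $M(M-1)\binom{M-2}{j-2}=j(j-1)\binom{M}{j}$, the coefficient of $z^j$ factors as $\binom{M}{j}$ times
\[
\sum_{i=1}^N a_i\theta_i^j \;+\; j\sum_{i=1}^N b_i\theta_i^{j-1} \;+\; j(j-1)\theta_N^{j-2}\,c .
\]
Since $\binom{M}{j}\neq 0$ for $j=0,\ldots,M$, each such bracket must vanish. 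Crucially, this bracket is exactly $\ell(\theta^j)$, where
\[
\ell(\phi):=\sum_{i=1}^N a_i\,\phi(\theta_i)+\sum_{i=1}^N b_i\,\phi'(\theta_i)+c\,\phi''(\theta_N)
\]
is a linear functional on $\mathbb{R}[\theta]$. Thus $\ell$ annihilates the basis $\{1,\theta,\ldots,\theta^M\}$ of $\mathbb{R}[\theta]_{\leq M}$, hence vanishes identically on that space.

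The finish is a direct appeal to Hermite interpolation. Assigning multiplicity $2$ to each of $\theta_1,\ldots,\theta_{N-1}$ and multiplicity $3$ to $\theta_N$ gives a node count of $2(N-1)+3=2N+1$, so by the standard unisolvence theorem the $2N+1$ functionals $\phi\mapsto\phi(\theta_i),\phi'(\theta_i)$ (for $i=1,\ldots,N$) together with $\phi\mapsto\phi''(\theta_N)$ form a basis of the dual of $\mathbb{R}[\theta]_{\leq 2N}$; the distinctness of the $\theta_i$ is essential here. The hypothesis $N\leq M/2$ gives $2N\leq M$, so $\mathbb{R}[\theta]_{\leq 2N}\subset\mathbb{R}[\theta]_{\leq M}$, and vanishing of $\ell$ on the larger space forces $a_i=b_i=c=0$.

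The only conceptual step is recognizing the Hermite-interpolation structure lurking in the $j$-indexed system of coefficient equations; once the collapse of the three different binomial coefficients onto a common $\binom{M}{j}$ is in hand, this recognition is immediate, and the remaining steps (the generating-function isomorphism and the invocation of standard Hermite interpolation) are routine.
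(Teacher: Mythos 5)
Your proof is correct, and it reaches the same key system of scalar equations as the paper but by a different route and with a different finish. The paper assumes the vanishing combination, multiplies by $x^n$ and sums over $x=0,\ldots,M$ for $n=0,1,\ldots,2N$ (using that the $n$-th moments of Bin$(M,\theta)$ are degree-$n$ polynomials in $\theta$, with the falling-factorial factors ${}_n\mathrm{P}_1=n$, ${}_n\mathrm{P}_2=n(n-1)$ appearing), which produces exactly your relations $\ell(\theta^n)=0$; it then packages these as a confluent-Vandermonde-type matrix $W$ and concludes by asserting the explicit determinant formula $\det W\propto\prod_{1\leq i<j\leq N-1}(\theta_i-\theta_j)^4\prod_{1\leq i\leq N-1}(\theta_i-\theta_N)^6$, nonzero by distinctness. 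You instead pass to the probability generating function $(1+\theta z)^M$, read off coefficients of $z^j$ (the identities $M\binom{M-1}{j-1}=j\binom{M}{j}$ and $M(M-1)\binom{M-2}{j-2}=j(j-1)\binom{M}{j}$ are right), and finish by the unisolvence of Hermite interpolation with double nodes at $\theta_1,\ldots,\theta_{N-1}$ and a triple node at $\theta_N$; this is legitimate Hermite (not Birkhoff) data since the derivative orders at $\theta_N$ are the consecutive orders $0,1,2$, with $\phi'(\theta_N)$ supplied by the $b_N$ term. Your version buys two things: the hypothesis $2N\leq M$ is used transparently (you need the monomials up to degree $2N$ inside $\mathbb{R}[\theta]_{\leq M}$), and you avoid having to verify the determinant identity, which the paper leaves as ``a straightforward calculation''; the paper's version, in exchange, exhibits the explicit factorization of $\det W$. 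Both arguments hinge on the same two ingredients, distinctness of the $\theta_i$ and $2N\leq M$, so the proposal is a sound and slightly more self-contained alternative to the paper's proof.
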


\begin{proof}\leavevmode\par
  Using real numbers $\{s_i\}, \{t_i\}, u$, assume that:
  \begin{equation}\label{eq:kongou02}
  \sum_{i=1}^N s_i\tilde{p}(x|\theta_i)
  +\sum_{i=1}^N t_i\frac{\partial\tilde{p}}{\partial\theta}(x|\theta_i)
  +u\frac{\partial^2\tilde{p}}{\partial\theta^2}(x|\theta_N)
  =0
  \end{equation}
  We need to show $s_i=t_i=u=0~(i=1,\ldots,N)$. First, summing (\ref{eq:kongou02}) over $x=0,1,\ldots,M$ gives:
  \begin{equation}\label{eq:kongou03}
    \sum_{i=1}^Ns_i=0
  \end{equation}
  Next, multiplying (\ref{eq:kongou02}) by $x$ and summing over $x=0,1,\ldots,M$, and using (\ref{eq:kongou03}), we get:
  \begin{equation}\label{eq:kongou04}
    \sum_{i=1}^Ns_i\theta_i+\sum_{i=1}^N t_i=0
  \end{equation}  
  Further, multiplying (\ref{eq:kongou02}) by $x^2$ and summing over $x=0,1,\ldots,M$, and manipulating (\ref{eq:kongou04}), results in:
  \begin{equation}\label{eq:kongou05}
    \sum_{i=1}^Ns_i\theta_i^2+2\sum_{i=1}^N t_i\theta_i+2u=0
  \end{equation}  
  Notably, for a binomial distribution $X\sim$Bin$(M,\theta)$, the $n(\leq M)$th central moment is a polynomial in $\theta$ of degree $n$. Repeating the same argument for $n=2N(\leq M)$, we obtain:
  \[
  \sum_{i=1}^Ns_i\theta_i^{2N}
  +{}_{2N} \mathrm{P}_{1}\sum_{i=1}^{N} t_i\theta_i^{2N-1}
  +{}_{2N} \mathrm{P}_{2}u\theta_N^{2N-2}
  =0
  \]
  Here, ${}_m \mathrm{P}_n:=\frac{m!}{(m-n)!}$ for non-negative integers $m\geq n$. we define the matrix $W$ as follows:
\[
W:=
\left(
\begin{array}{ccc|ccc|c}
  1 & \cdots & 1 & 0 & \cdots & 0 & 0   \\
  {}_{1} \mathrm{P}_{0}\theta_1 & \cdots & {}_{1} \mathrm{P}_{0}\theta_N & 1 & \cdots & 1 & 0  \\
  {}_{2} \mathrm{P}_{0}\theta^2_1 & \cdots & {}_{2} \mathrm{P}_{0}\theta_N^2 & {}_{2} \mathrm{P}_{1}\theta_1 & \cdots & {}_{2} \mathrm{P}_{1}\theta_N & {}_{2} \mathrm{P}_{2}  \\
  {}_{3} \mathrm{P}_{0}\theta^3_1 & \cdots & {}_{3} \mathrm{P}_{0}\theta^3_N & {}_{3} \mathrm{P}_{1}\theta_1^2 & \cdots & {}_{3} \mathrm{P}_{1}\theta_N^2 & {}_{3} \mathrm{P}_{2}\theta_N  \\
  {}_{4} \mathrm{P}_{0}\theta^4_1 & \cdots & {}_{4} \mathrm{P}_{0}\theta^4_N & {}_{4} \mathrm{P}_{1}\theta^3_1 & \cdots & {}_{4} \mathrm{P}_{1}\theta_N^3 & {}_{4} \mathrm{P}_{2}\theta^2_N  \\
  \vdots &  \cdots & \vdots & \vdots & \cdots & \vdots & \vdots \\
  {}_{2N} \mathrm{P}_{0}\theta^{2N}_1 & \cdots & {}_{2N} \mathrm{P}_{0}\theta^{2N}_N & {}_{2N} \mathrm{P}_{1}\theta^{2N-1}_1 &\cdots &  {}_{2N} \mathrm{P}_{1}\theta^{2N-1}_{N} & {}_{2N} \mathrm{P}_{2}\theta^{2N-2}_N   \\
\end{array}
\right)
\]
Using this, $\{s_i\}, \{t_i\}, u$ are the solutions to:
\[
W(s_1~\cdots~s_N~t_1~\cdots~t_N~u)=\boldsymbol{0}~\in\mathbb{R}^{2N+1}.
\]
Therefore, it is sufficient to show that $W$ is an invertible matrix. A straightforward calculation shows:
\[
\text{det }W\propto \prod_{1\leq i<j\leq N-1}(\theta_i-\theta_j)^4\times\prod_{1\leq i\leq N-1}(\theta_i-\theta_N)^6,
\]
and since $\{\theta_1,\ldots,\theta_N\}$ are distinct, $\text{det }W\neq0$, meaning $W$ is indeed an invertible matrix.

\end{proof}

\begin{rem}\leavevmode\par
If we restrict the parameter space $\Theta$ such that all realization parameters satisfy condition (\ref{eq:teisu04}), then this upper bound becomes the learning coefficient itself.

Furthermore, in the proof, the fact that the distribution is binomial is not essential; what is essential is the linear independence as specified in (\ref{eq:kongou01}). For instance, it can be verified that this condition is also satisfied for a Poisson distribution with mean $\theta$. Regarding mixed Poisson distributions, the learning coefficients are known from \cite{sato1}, and the upper bound obtained in Example~\ref{ex_kongou02} coincides with the learning coefficients given in \cite{sato1}.
\end{rem}

\subsection{Reduced Rank Regression Model}
Consider a three-layer neural network with $M, H, N$ units in the input, hidden, and output layers, respectively. Define the statistical model with input $x \in \mathbb{R}^M$ and output $y \in \mathbb{R}^N$ by
  \begin{align*}
  p(y|x,\theta):=
  \frac{1}{(2\pi)^{N/2}}
  \exp\left(-\frac{1}{2}||y-BAx||^2\right),
  \end{align*}
where $||~ ||$ denotes the norm in the $N$-dimensional Euclidean space. Let the parameter space be
  \[
  \Theta:=\left\{\theta=(A,B)| A\in \mathbb{R}^{H\times M},\ B\in \mathbb{R}^{N\times H}\right\}.
  \]
Such a model is referred to as a reduced rank regression model.

Let the true distribution parameters be $(A_*, B_*)$, and the rank of the matrix $B_*A_*$ be $r$. The learning rate for this model has been established for all combinations $(M,H,N,r)$ \cite{Aoyagi1}. Here, we calculate the learning rate for specific values of $(M,H,N,r)$ and verify that it coincides with previous studies.

\begin{rem}
The model is semi-regular when $r>0$, but not when $r=0$.
\end{rem}

We assume that the input space $x$ is non-degenerate, i.e., $x = (x_1, \ldots, x_M)^\top$ is assumed to be stochastically independent.

Let $z:=y-B_*A_*x \in \mathbb{R}^N$. By assumption, given $x$, $z|x$ follows the $N$-dimensional standard normal distribution $N(0, I_N)$. Also, the log-likelihood ratio function $f$ can be represented as
  \begin{align*}
    f(x,y|\theta) &= \log \frac{p(x,y|\theta_*)}{p(x,y|\theta)} \\
    &= \frac{1}{2}\left\{||y-BAx||^2 - ||y-B_*A_*x||^2\right\} \\
    &= \frac{1}{2}\left\{||z-Sx||^2 - ||z||^2\right\} \\
    &= \frac{1}{2}||Sx||^2 - z^\top Sx,
  \end{align*}
where $S := BA - B_*A_*$.

From the linear independence of $x$, the following lemma follows.

  \begin{lem}\label{lem:rrr01}\leavevmode\par
    \begin{itemize}
      \item [(1)] 
        The $MN$ random variables $z_ix_j\ (i=1,\ldots,N, j=1,\ldots,M)$ are linearly independent.
      \item [(2)]
        The set of realized parameters is given by
        \[
        \Theta_*=\left\{(A,B)|S=0\right\}=\left\{(A,B)|BA=B_*A_*\right\}.
        \]
    \end{itemize}
  \end{lem}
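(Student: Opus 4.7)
The plan is to exploit two facts: first, the key independence structure $z \perp x$, which follows because the conditional distribution $z \mid x \sim N(0, I_N)$ does not depend on $x$; and second, the non-degeneracy assumption on $x$, which I interpret as the Gram matrix $\Sigma_x := \mathbb{E}[xx^\top]$ being positive definite (equivalently, $x_1,\ldots,x_M$ are linearly independent as elements of $L^2$). Both parts then reduce to short computations.

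For (1), I would assume a linear relation $\sum_{i,j} c_{ij} z_i x_j = 0$ a.s.\ and take the $L^2$ norm of the left-hand side. Using $z \perp x$ together with $\mathbb{E}[z_i z_k] = \delta_{ik}$, the square expectation factors cleanly:
\[
\mathbb{E}\!\left[\Bigl(\sum_{i,j} c_{ij} z_i x_j\Bigr)^{\!2}\right]
= \sum_{i,k} \delta_{ik}\sum_{j,\ell} c_{ij}c_{k\ell}\,\mathbb{E}[x_j x_\ell]
= \sum_{i=1}^{N} \mathbf{c}_i^{\top}\Sigma_x\, \mathbf{c}_i,
\]
where $\mathbf{c}_i := (c_{i1},\ldots,c_{iM})^\top$. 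Positive definiteness of $\Sigma_x$ forces $\mathbf{c}_i = 0$ for each $i$, hence all $c_{ij} = 0$.

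For (2), I would compute $K(\theta) = \mathbb{E}[f(X,Y\mid\theta)] = \tfrac{1}{2}\mathbb{E}[\|Sx\|^2] - \mathbb{E}[z^\top S x]$. The second term vanishes because $\mathbb{E}[z] = 0$ and $z \perp x$, leaving $K(\theta) = \tfrac{1}{2}\mathbb{E}[\|Sx\|^2]$. Thus $K(\theta)=0$ iff $Sx=0$ a.s., i.e.\ each component $(Sx)_i = \sum_j S_{ij} x_j$ vanishes a.s. Linear independence of $x_1,\ldots,x_M$ then forces $S_{ij}=0$ for all $i,j$, i.e.\ $S = BA - B_*A_* = 0$, which is precisely the claimed description of $\Theta_*$.

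There is no serious obstacle here; the argument is essentially routine once the right interpretation of ``non-degenerate'' is fixed. The only subtlety worth noting is being precise about what the assumption on $x$ delivers: mutual stochastic independence of nontrivial components gives positive definiteness of $\Sigma_x$, and it is this $L^2$-linear independence, rather than stochastic independence per se, that drives both parts of the lemma.
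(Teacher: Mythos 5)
Your argument is correct and essentially the paper's own: part (2) is identical (the cross term vanishes by conditioning on $x$, and non-degeneracy of $x$ turns $Sx=0$ a.s.\ into $S=0$), while in part (1) the paper conditions on $x$ and uses $z^{\top}Tx \mid x \sim N(0,\|Tx\|^{2})$ to force $Tx=0$ a.s., which is just the conditional version of your unconditional second-moment computation using $z \perp x$ and $\mathbb{E}[zz^{\top}]=I_{N}$. The only extra ingredient you invoke, existence and positive definiteness of $\Sigma_{x}=\mathbb{E}[xx^{\top}]$, is harmless under the paper's $L^{2}$ assumptions and is equivalent to the linear independence of $x_{1},\ldots,x_{M}$ that the paper's proof itself relies on.
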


\begin{proof}\leavevmode\par
  \begin{itemize}
    \item [(1)]
      For $T \in \mathbb{R}^{N \times M}$, show that
      \[
      z^\top T x = 0 ~ \text{a.s.} \Rightarrow T = 0.
      \]
      Let $W := z^\top T x$. Due to the reproductive property of the normal distribution, $W|x \sim N(0, ||Tx||^2)$. Thus, by assumption, $||Tx||^2 = 0$, i.e., $Tx = 0$ (a.s.), and from the independence of $x$, it follows that $T = 0$.
    \item [(2)]
      For the expected value of
      \[
      f(x,y|\theta) = \frac{1}{2}||Sx||^2 - z^\top Sx,
      \]
      note that $\mathbb{E}[z^\top Sx] = \mathbb{E}[\mathbb{E}[z^\top Sx | x]] = 0$, hence
      \[
      K(\theta) = \frac{1}{2}\mathbb{E}\left[||Sx||^2\right].
      \]
      In the set of realization parameters, $K(\theta) = 0$, meaning $||Sx||^2 = 0$, i.e., $Sx = 0$ (a.s.), and from the independence of $x$, $S = 0$ follows.
  \end{itemize}  
\end{proof}

\begin{ex}\leavevmode\par
  For $(H, M, N, r) = (2, 1, 2, 1)$, any realization parameters of the statistical model $p$ satisfy Assumption~\ref{mainass} when $(d_1, r, m) = (2, 2, 1)$. Notably, the learning coefficient is 1 (multiplicity is 1).
\end{ex}

\begin{rem}
  This result is consistent with previous studies \cite{Aoyagi1}.
\end{rem}

\begin{proof}\leavevmode\par
  Since the rank of $B_*A_* \in \mathbb{R}^{N\times M} (=\mathbb{R}^{2\times 1})$ is 1, there exist invertible matrices $P, Q$ such that
  \[
  P^{-1}B_*A_*Q^{-1}=\left(\begin{array}{c}1 \\ 0 \end{array}\right).
  \]
  By substituting parameters $(A, B)$ with $B' := P^{-1}B$ and $A' := AQ^{-1}$, we can assume
  \[
  B_*A_*=\left(\begin{array}{c}1 \\ 0 \end{array}\right).
  \]
  Hereafter, unless otherwise noted, we will continue to denote them as $(A, B)$.\\

  Let the realization parameters be $(A, B) = (\alpha, \beta)$, such that $\beta\alpha = B_*A_*$. Perform the coordinate transformation $B' := B - \beta$ and $A' := A - \alpha$, ensuring that the origin corresponds to $(\alpha, \beta)$. Noting that the ranks of $\beta, \alpha$ are at least 1, we may assume $\beta_{11} \neq 0$ and $\alpha_{11} \neq 0$. Then,
  \begin{align*}
    S =& (B+\beta)(A+\alpha)-B_*A_*\\
    =& \left(
      \begin{array}{cc} 
        b_{11}+\beta_{11} & b_{12}+\beta_{12}\\
        b_{21}+\beta_{21} & b_{22}+\beta_{22}
      \end{array}\right)
      \left(\begin{array}{c} 
        a_{11}+\alpha_{11}\\
        a_{21}+\alpha_{21}
      \end{array}\right)
      -\left(\begin{array}{c}
        1  \\ 
        0 
        \end{array}\right)
  \end{align*}
  can be expressed.

  Considering the $(1,1)$ component of $S$ near $b_{11}=0$ where $b_{11}+\beta_{11} \neq 0$, we find
  \begin{align*}
  &(b_{11}+\beta_{11})(a_{11}+\alpha_{11})+(b_{12}+\beta_{12})(a_{21}+\alpha_{21})-1=0\\
  \Leftrightarrow\ & a_{11}=-\frac{(b_{12}+\beta_{12})(a_{21}+\alpha_{21})-1}{b_{11}+\beta_{11}}-\alpha_{11}
  \end{align*}
  leads to the coordinate transformation
  \[
  a'_{11}:=a_{11}+\frac{(b_{12}+\beta_{12})(a_{21}+\alpha_{21})-1}{b_{11}+\beta_{11}}+\alpha_{11}
  \]
  resulting in the $(1,1)$ component of $S$ being
  \[
  (b_{11}+\beta_{11})\cdot a'_{11}.
  \]

  Next, for the $(2,1)$ component of $S$, noting $a_{11}=0$ where $a_{11}+\alpha_{11} \neq 0$, we get
  \[
  (b_{21}+\beta_{21})(a_{11}+\alpha_{11})+(b_{22}+\beta_{22})(a_{21}+\alpha_{21})=0
  \]
  \begin{align*}
    \Leftrightarrow\ b_{21} &=-\frac{(b_{22}+\beta_{22})(a_{21}+\alpha_{21})}{a_{11}+\alpha_{11}}-\beta_{21}\\
    &=-\frac{(b_{11}+\beta_{11})(b_{22}+\beta_{22})(a_{21}+\alpha_{21})}
    {(b_{11}+\beta_{11})a'_{11}-(b_{12}+\beta_{12})a_{21}-b_{12}\alpha_{21}+\beta_{11}\alpha_{11}}
    -\beta_{21}
  \end{align*}
  leads to the transformation
  \[
  b'_{21}:=
  b_{21}
  +\frac{(b_{11}+\beta_{11})(b_{22}+\beta_{22})(a_{21}+\alpha_{21})}
  {(b_{11}+\beta_{11})a'_{11}-(b_{12}+\beta_{12})a_{21}-b_{12}\alpha_{21}+\beta_{11}\alpha_{11}}
  +\beta_{21}
  \]
  resulting in the $(2,1)$ component of $S$ being
  \begin{align*}
  &b'_{21}\cdot(a_{11}+\alpha_{11})\\
  =&
  b'_{21}\cdot
  \frac{(b_{11}+\beta_{11})a'_{11}-(b_{12}+\beta_{12})a_{21}-b_{12}\alpha_{21}+\beta_{11}\alpha_{11}}
  {b_{11}+\beta_{11}}
  \end{align*}
  expressed.

  With these coordinate transformations,
  \[
  S=
  \left(\begin{array}{c}
    (b_{11}+\beta_{11})a_{11}^{\prime}  \\
    b'_{21}\cdot
  \frac{(b_{11}+\beta_{11})a'_{11}-(b_{12}+\beta_{12})a_{21}-b_{12}\alpha_{21}+\beta_{11}\alpha_{11}}
  {b_{11}+\beta_{11}} 
  \end{array}\right)
  \]
  is expressed, and it is clear that $S=0$ only when $(a'_{11}, b'_{21})=(0,0)$.
  \begin{align*}
  &\left.\frac{\partial f}{\partial a'_{11}}\right|_{(a_{11}^{\prime}, a_{21}, b_{11}, b_{12}, b_{21}^{\prime}, b_{22})=0}
    =-\beta_{11}z_1x_1\\
  &\left.\frac{\partial f}{\partial b'_{21}}\right|_{(a_{11}^{\prime}, a_{21}, b_{11}, b_{12}, b_{21}^{\prime}, b_{22})=0}
    =-\alpha_{11}z_2x_1
  \end{align*}
  are linearly independent. Therefore, treating other parameters as constants, this statistical model satisfies Assumption~\ref{mainass} at the origin $(a_{11}^{\prime}, a_{21}, b_{11}, b_{12}, b_{21}^{\prime}, b_{22})=0$ when $(d_1, r, m)=(2, 2, 1)$, and by Main Theorem~\ref{mainthm1}, the real log canonical threshold at the origin $O$ is
  \[
  \lambda_O = \frac{2}{2} = 1
  \]
  (multiplicity is 1).\\
\end{proof}

\section{Conclusion}
In this paper, we have further generalized the main results obtained in \cite{kurumadani1}. Specifically, we have generalized \cite[Main Theorem 1]{kurumadani1} and demonstrated that the formula for the Taylor expansion of the Kullback-Leibler divergence is valid even when some parameters $\tau$ are considered as constants (Main Theorem~\ref{mainthm}). Main Theorem~\ref{mainthm1} applies this to models satisfying Assumption~\ref{mainass} and provides a concrete formula for the real log canonical threshold.

As concrete examples using Main Theorem~\ref{mainthm1}, we have presented several calculations of the real log canonical threshold in the set of realization parameters $\Theta_*$, where the model is non-singular. These include the upper bound of the learning coefficient for mixed binomial distributions and the real log canonical threshold for specific cases of reduced rank regression models.

Our Main Theorem~\ref{mainthm1} is expected to be broadly applicable at non-singular points. That is, by computing the real log canonical threshold at non-singular points using Main Theorem~\ref{mainthm1}, it is possible to obtain an upper bound on the learning coefficient. Furthermore, finding 'better' non-singular points allows for a tighter evaluation of the upper bound on the learning coefficient. This upper bound is also tighter than those obtained in previous studies (\ref{eq:zyokai02}). This approach does not require an actual blow-up; it suffices to verify whether Assumption~\ref{mainass} is satisfied, thus reducing computational effort. This is considered an effective approach for obtaining upper bounds on the learning coefficient for all statistical models. Future research will focus on deriving the real log canonical thresholds at singular points within the set of realization parameters $\Theta_*$.

\backmatter

\bmhead{Acknowledgements}
I would like to express my gratitude to Professor Yuzuru Suzuki of Osaka University, who taught me the fundamentals of Bayesian theory and provided me with a research theme that bridges algebraic geometry and statistics, my area of expertise. I am also grateful for his feedback and review of this paper. Furthermore, I appreciate Professor Sumio Watanabe for proposing the concept of the learning coefficient.

\section*{Declarations}
The author has no relevant financial or non-financial interests to disclose.


    

\bibliography{sn-bibliography}
\end{document}